\let\uml\"
\title{Asymptotically Orthonormal Basis and Toeplitz operators}
\author[Fricain]{Emmanuel Fricain}
\address{Laboratoire Paul Painlev\'e, Universit\'e Lille 1, 59 655 Villeneuve d'Ascq C\'edex }
\email{emmanuel.fricain@math.univ-lille1.fr}
\author[Rupam]{Rishika Rupam}
\address{Laboratoire Paul Painlev\'e, Universit\'e Lille 1, 59 655 Villeneuve d'Ascq C\'edex}
\email{rishika.rupam@math.univ-lille1.fr}
\thanks{The authors were supported by Labex CEMPI (ANR-11-LABX-0007-01)}
\keywords{Multipliers, model spaces, Beurling--Malliavin densities}
\subjclass[2010]{30J05, 30H10}
\date{}
\newcommand\HH{{\mathcal H}}
\newcommand\R{{\mathbb R}}
\newtheorem{thm}{Theorem}[section]
\newtheorem{lem}[thm]{Lemma}
\begin{document}
\maketitle

\begin{abstract}
Recently, M. Mitkovski \cite{mitkovski2017basis} gave a criterion for the basicity of a sequence of complex exponentials in terms of the invertibility properties of a certain naturally associated Toeplitz operator, in the spirit of the celebrated work of Khrushch{\"e}v--Nikolski--Pavlov \cite{khruscev1981unconditional}. In our paper, we extend the results of Mitkovski to model spaces associated with meromorphic inner functions and we also give an analogue for the property of being an asymptotically orthonormal basis.
\end{abstract}

\section{Introduction}
The classical theory of Fourier series says that the trigonometric system $(e^{int})_{n\in\mathbb Z}$ forms an orthonormal basis of $L^2(0,2\pi)$. It is natural to ask what happens if we replace the trigonometric system by another systems of complex exponential $(e^{i\lambda_n t})_{n\in\mathbb Z}$, $\Lambda=(\lambda_n)_{n\in\mathbb Z}\subset\mathbb R$. Indeed this problem has a very long and rich history and finds its roots in the work of Paley--Wiener and Levinson. There at least two directions of approach to this problem. First, we can try to find small perturbations of the trigonometric systems which retains the desired basis properties of expansion. This problem of perturbation in fact gave rise to a whole direction of research, culminating with the beautiful theorem of Ingham--Kadets. See \cite{Young} for more details. The second direction is to find characterization of discrete sequences $\Lambda=(\lambda_n)_{n\in\mathbb Z}\subset\mathbb R$ such that $(e^{i\lambda_n t})_{n\in\mathbb Z}$ forms a Riesz basis/asymptotically orthonormal basis of $L^2(0,2\pi)$. In the seventies, an approach using Toeplitz operators was developed by Douglas, Sarason and Clark. Not only did it allow the recapture of all the classical results but also permitted Khrushch{\"e}v--Nikolski--Pavlov \cite{khruscev1981unconditional} to give the solution of the Riesz basis problem for complex exponentials. We briefly explain the basic idea of the Toeplitz approach. Since the map $f(t)\longmapsto f(t)e^{-t}$ is an isomorphism on $L^2(0,2\pi)$, it is clear that $(e^{i\lambda_n t})_{n\in\mathbb Z}$ forms a Riesz basis of $L^2(0,2\pi)$ if and only if $(e^{i\lambda'_n t})_{n\in\mathbb Z}$ does, where $\lambda'_n=\lambda_n+i$, $n\in\mathbb Z$. If we apply the inverse of the Fourier transform, it is easy to see that $(e^{i\lambda'_n t})_{n\in\mathbb Z}$ is a Riesz basis for $L^2(0,2\pi)$ if and only if $(k_{\lambda'_n}^{\Theta_{2\pi}})_{n\in\mathbb Z}$ is a Riesz basis for $\mathcal K_{\Theta_{2\pi}}$. Here $\mathcal K_{\Theta_{2\pi}}$ is the model space associated to $\Theta_{2\pi}(z)=e^{2i\pi z}$ and $k_{\lambda'_n}^{\Theta_{2\pi}}$ is its reproducing kernel. Then, the crucial idea is to observe that the basis properties of the reproducing kernels in $K_{\Theta_{2\pi}}$ are encoded in the invertibility property of the Toeplitz operator $T_{\overline{B_{\Lambda'}}\Theta_{2\pi}}$, where $B_{\Lambda'}$ is the Blaschke product associated to $\Lambda'=(\lambda_n')_{n\in\mathbb Z}$. If we want to apply the same method to study the asymptotically orthonormal basis property, we are faced with a difficulty because the latter property is not preserved by isomorphism, and consequently we cannot shift $\Lambda$ to $\Lambda'$. But recently, Mitkovski \cite{mitkovski2017basis} has shown that we can adapt the approach of Khrushch{\"e}v--Nikolski--Pavlov avoiding the use of the translated sequence but instead exploiting the ideas of Makarov--Poltoratski's Toeplitz approach to the completeness problem of exponential systems. In this paper, we will show that one can generalize Mitkovski's results to more general model spaces and we also give an analogous result for the asymptotically orthonormal basis property. \\

In Section 2, we start with some general preliminaries on Riesz basis, asymptotically orthonormal basis and meromorphic inner functions. As we will see, our characterization of asymptotically orthonormal basis involves the condition that a certain Toeplitz operator is unitary modulo compact (meaning that it can be written as the sum of a unitary operator and a compact one). In Section 3, we thus revisit some classical results on  Toeplitz operators. In particular, we explain how to derive a result of Douglas (characterizing the Toeplitz operators which are unitary modulo compact) from Hartman's theorem on compactness of Hankel operators and Devinatz--Widom's theorem on invertibility of Toeplitz operators. These results in the context of Hardy space of the unit disc can be found for instance in Peller \cite{Peller} and Nikolski \cite{nikolski2009functions}. Here we need the analogue for the upper-half plane and we provide the details with an aim to make our presentation complete. In Section 4, we give a characterization for Riesz basis property and in the last section, we discuss the asymptotically orthonormal basis property for systems of reproducing kernels in the model spaces.

\section{Preliminaries}

\subsection{Definition of Riesz sequences and AOB}
Let $\HH$ be a Hilbert space, $\mathfrak X=(x_n)_{n\geq 1}$ be a sequence of vectors in $\HH$. We recall that $\mathfrak X$ is said to be:
\begin{enumerate}
\item[(a)] \emph{minimal} if for every $n\geq 1$, 
$$
x_n\not\in\mbox{span}(x_\ell:\ell\not=n),
$$
where $\mbox{span}(\dots)$ denotes the closure of the finite linear combination of $(\dots)$;
\item[(b)] A \emph{Riesz sequence} (abbreviated RS) if there exists two positive constants $c,C$ such that 
\begin{equation}\label{eq:defn-RS}
c\sum_{n\geq 1}|a_n|^2\leq \|\sum_{n\geq 1}a_n x_n\|^2_{\HH}\leq C\sum_{n\geq 1}|a_n|^2,
\end{equation}
for every finitely supported sequence of complex numbers $(a_n)_n$;
\item[(c)] An \emph{asymptotically orthonormal sequence} (abbreviated AOS) if there exists $N_0\in\mathbb{N}$ such that for all $N\geq N_0$ there are positive constants $c_N,C_N$ verifying
\begin{equation}\label{eq:AOB}
c_N\sum_{n\geq N}|a_n|^2\leq \|\sum_{n\geq N}a_n x_n\|^2_{\HH}\leq C_N\sum_{n\geq N}|a_n|^2,
\end{equation}
for every finitely supported sequence of complex numbers $(a_n)_n$ and $\lim_{N\to \infty}c_N=1=\lim_{N\to \infty}C_N$;
\item[(d)] An \emph{asymptotically orthonormal basic sequence} (abbreviated AOB) if it is an AOS with $N_0=1$; 
\item[(e)] A \emph{Riesz basis} for $\mathcal H$ (abbreviated RB) if it is a complete Riesz sequence, that is a Riesz sequence satisfying 
$$
\mbox{span}(x_n:n\geq 1)=\HH.
$$
 \end{enumerate}
It is easy to see that $(x_n)_{n\geq 1}$ is an AOB if and only if it is an AOS as well as a RS. Also, $(x_n)_{n\geq 1}$ is an AOB if and only if it is minimal and an AOS. The reader should pay attention to the fact that AOB does not imply completeness; an AOB is a basis for its span but not necessarily for the whole space.

\subsection{Meromorphic inner functions and model spaces.}
A function $\Theta:\mathbb C_+\longrightarrow \mathbb C$ is said to be {\em inner} on the upper-half plane $\mathbb C_+=\{z\in\mathbb C:\Im(z)>0\}$ if $\Theta$ is analytic and bounded on $\mathbb C_+$ and if its radial limits are of modulus one almost everywhere on $\mathbb R$. We say that the inner function $\Theta$ is a meromorphic inner function (abbreviated by MIF) if $\Theta$ admits a meromorphic extension to the whole complex plane. A well-known theorem by Riesz and V. Smirnov says that all MIF functions have the form 
$$
\Theta(z)=B(z)e^{iaz}
$$
where $a\geq 0$ and $B$ is the Blaschke product formed with the zeros of the function $\Theta$ given by $Z=(z_n)_{n\geq 1}$, where $|z_n|\to\infty$ and satisfy the convergence criterion
$$
\sum_{n\geq 1}\frac{\Im z_n}{1+|z_n|^2}<\infty.
$$
In particular, using this representation, it is not difficult to see that $\Theta$ will admit an analytic extension through the real line.

To each inner function $\Theta$, we associate a {\em model space}
$$
\mathcal K_\Theta=\mathcal H^2\ominus\Theta\mathcal H^2=\mathcal H^2\cap \Theta\overline{\mathcal H^2},
$$
where $\mathcal H^2$ is the Hardy space on $\mathbb C_+$. It is well known that if $\Theta$ is a MIF, then every function $f\in\mathcal K_\Theta$ can be extended analytically through the real line. In the case when $\Theta$ is a MIF with $\Theta'\in L^\infty(\mathbb R)$, we will need some uniform $L^2$--estimate on this extension. We start with the following result appearing in \cite[Lemma 2]{dyakonov2002differentiation}. For completeness, we provide a proof. 

\begin{lem}\label{Lem:mean-value-thm}
Let $\Theta$ be a MIF inner function such that $\Theta' \in L^\infty(\R)$. Then,  for every $\varepsilon>0$, we have
$$\displaystyle\inf_{0\leq \Im z <\varepsilon} |\Theta(z)|\geq 1-\varepsilon \|\Theta'\|_\infty.
$$
\end{lem}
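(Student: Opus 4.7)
The plan is to reduce the lemma to the pointwise estimate
\[
|\Theta(x_0 + iy)| \geq 1 - y\,\|\Theta'\|_\infty
\]
valid for every $x_0 \in \R$ and every $y \geq 0$; taking the infimum over $(x_0, y)$ with $0 \leq y < \varepsilon$ then yields the stated inequality. Since $\Theta$ is a MIF, it extends holomorphically across $\R$ with $|\Theta(x)| = 1$ for every $x \in \R$, so by the triangle inequality the task reduces to proving $|\Theta(x_0 + iy) - \Theta(x_0)| \leq \|\Theta'\|_\infty\, y$.

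My approach is to study the difference quotient
\[
g(z) := \frac{\Theta(z) - \Theta(x_0)}{z - x_0}.
\]
Since $\Theta$ extends analytically across $\R$, the apparent singularity at $x_0$ is removable with $g(x_0) = \Theta'(x_0)$, so $g$ is holomorphic on $\mathbb C_+$ and analytic across $\R$. I need two bounds on $g$. First, on the real line the fundamental theorem of calculus gives $\Theta(x) - \Theta(x_0) = \int_{x_0}^{x}\Theta'(t)\,dt$, and hence $|g(x)| \leq \|\Theta'\|_\infty$ for every $x \in \R$. Second, the trivial estimate $|g(z)| \leq 2/|z - x_0|$ (coming from $|\Theta| \leq 1$ in $\mathbb C_+$), combined with the local boundedness near $x_0$ provided by analyticity, shows that $g$ belongs to $H^\infty(\mathbb C_+)$.

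The conclusion follows from the maximum modulus principle. Applied to $g$ on large squares $R_N = \{|\Re z - x_0| < N,\ 0 < \Im z < N\}$, the top and lateral portions of $\partial R_N$ satisfy $|g(z)| \leq 2/N$, while the bottom satisfies $|g(x)| \leq \|\Theta'\|_\infty$; for $N \geq 2/\|\Theta'\|_\infty$ the boundary maximum is $\|\Theta'\|_\infty$, and letting $N \to \infty$ delivers $|g(z)| \leq \|\Theta'\|_\infty$ throughout $\mathbb C_+$. Specializing to $z = x_0 + iy$ gives $|\Theta(x_0 + iy) - \Theta(x_0)| = y\,|g(x_0 + iy)| \leq \|\Theta'\|_\infty\, y$, which closes the argument. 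The most delicate step will be justifying the $H^\infty(\mathbb C_+)$-boundedness of $g$ carefully enough to invoke the maximum principle on the unbounded half-plane; once that is in hand, the rest is routine.
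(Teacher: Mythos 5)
Your proof is correct, and it takes a genuinely different route from the paper's. The paper first shows $\Theta'$ extends to an $H^\infty(\mathbb{C}_+)$ function with $\sup_{\mathbb{C}_+}|\Theta'|\le\|\Theta'\|_{L^\infty(\R)}$: it applies the mean value theorem on $\R$ to the first difference $\Theta(\cdot+h)-\Theta(\cdot)$, propagates the resulting Lipschitz bound into $\mathbb{C}_+$ by observing that this difference is the Poisson integral of its boundary values, and then lets $h\to 0$; the lemma follows by integrating $\Theta'$ along the vertical segment from $x$ to $x+iy$. You instead fix the base point $x_0$, work with the centered difference quotient $g(z)=(\Theta(z)-\Theta(x_0))/(z-x_0)$, bound it on $\R$ by the fundamental theorem of calculus, and push the bound into the half-plane by a Phragm\'en--Lindel\"of-style maximum modulus argument on expanding rectangles, using the a priori decay $|g(z)|\le 2/|z-x_0|$ to handle the top and lateral edges. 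Both arguments rest on the same mechanism (a boundary bound on $\R$ is transferred to $\mathbb{C}_+$), but yours trades the Poisson-kernel machinery for a bare-hands maximum principle, while the paper's has the side benefit of recording the useful fact that $\Theta'\in H^\infty(\mathbb{C}_+)$. One remark on your last sentence: you do not actually need to establish $g\in H^\infty(\mathbb{C}_+)$ beforehand --- the maximum modulus principle on the bounded rectangle $R_N$ only requires $g$ to be continuous on $\overline{R_N}$ and holomorphic on $R_N$, which is supplied by the analytic continuation of $\Theta$ across $\R$; so the step you flag as delicate can simply be dropped.
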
  
\begin{proof}
Since $\Theta$ is analytic on the real line, we can apply the mean value theorem. Then, for all $x,h \in \R$,
$$|\Theta(x+h)-\Theta(x)|\leq M|h|,$$
where $M=\|\Theta'\|_\infty$.
Consider the function $z \rightarrow \Theta(z+h)-\Theta(z)$. This function is the Poisson integral of the boundary function $x\rightarrow \Theta(x+h)-\Theta(x),$ which permits to extend the latter estimate in $\mathbb C_+$. In other words, for all $h\in\mathbb R_+$ and $z\in\mathbb C_+$, we have
$$
|\Theta(z+h)-\Theta(z)|\leq M|h|.
$$ 
But then by the definition of the derivative,
$$|\Theta'(z)|\leq M, $$
for all $z \in \mathbb C_+$. Note that we have proved that $\Theta' \in H^\infty(\mathbb C_+)$. Now letting $z=x+iy$, $0<y<\varepsilon$, we have
$$
|\Theta(z)-\Theta(x)|\leq |z-x|\displaystyle\sup_{0\leq \Im z <\varepsilon} |\Theta'(z)|\leq \varepsilon M,
$$
which gives
\begin{equation}\label{eq2-Theta-minoration1}
|\Theta(z)|\geq 1-\varepsilon M.
\end{equation}
\end{proof}
Now, let $f\in \mathcal K_\Theta$ and $0<\varepsilon<\|\Theta'\|_\infty^{-1}$. We can write $f(z)=\Theta(z)\overline{h(z)}$, $z\in\mathbb C_+$, for some $h\in\mathcal H^2$. 
Then, if we define for $-\varepsilon<\Im(z)<0$, 
\begin{equation}\label{eq1-analytic-continuation}
f(z)=\frac{1}{\overline{\Theta(\bar z)}} \overline{h(\bar z)},
\end{equation}
we obtain the analytic continuation of $f$ on $\{z\in\mathbb C: \Im(z)>-\varepsilon\}$. 

\begin{lem}\label{lem:analytic-continuation}
Let $\Theta$ be a meromorphic inner function such that $\Theta'\in L^\infty(\mathbb R)$. Let $\varepsilon>0$ satisfying $\varepsilon<\|\Theta'\|_\infty^{-1}$. Then, for every $f\in \mathcal K_\Theta$, we have
$$
\sup_{|y|<\varepsilon}\int_{-\infty}^{+\infty}|f(x+iy)|^2\,dx\leq \frac{\|f\|_2^2}{(1-\varepsilon \|\Theta'\|_\infty)^2}.
$$
\end{lem}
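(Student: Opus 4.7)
The plan is to split the strip $\{|y|<\varepsilon\}$ into the upper part $0\le y<\varepsilon$, where $f$ is already in $\mathcal H^2$, and the lower part $-\varepsilon<y<0$, where the explicit formula \eqref{eq1-analytic-continuation} together with the lower bound of Lemma \ref{Lem:mean-value-thm} does the work.

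For the upper part, $f\in\mathcal H^2$ immediately gives
$$\int_{-\infty}^{+\infty}|f(x+iy)|^2\,dx\le\|f\|_2^2,$$
which is trivially bounded by $\|f\|_2^2/(1-\varepsilon\|\Theta'\|_\infty)^2$. So everything reduces to the lower strip. Write $M:=\|\Theta'\|_\infty$ and fix $y$ with $-\varepsilon<y<0$. Using the continuation formula \eqref{eq1-analytic-continuation}, for $f=\Theta\bar h$ with $h\in\mathcal H^2$, we have
$$f(x+iy)=\frac{\overline{h(x-iy)}}{\overline{\Theta(x-iy)}}.$$
Since $-y>0$ and $-y<\varepsilon$, the point $x-iy$ lies in the strip $0<\Im z<\varepsilon$, so Lemma \ref{Lem:mean-value-thm} gives $|\Theta(x-iy)|\ge 1-\varepsilon M$, and hence
$$|f(x+iy)|\le\frac{|h(x-iy)|}{1-\varepsilon M}.$$

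Integrating in $x$ yields
$$\int_{-\infty}^{+\infty}|f(x+iy)|^2\,dx\le\frac{1}{(1-\varepsilon M)^2}\int_{-\infty}^{+\infty}|h(x-iy)|^2\,dx.$$
Since $h\in\mathcal H^2$ and $-y>0$, the standard Hardy space estimate gives $\int|h(x-iy)|^2\,dx\le\|h\|_2^2$. Finally, $|\Theta|=1$ a.e. on $\mathbb R$ so $|f|=|h|$ a.e.\ there, giving $\|h\|_2=\|f\|_2$. Combining the two regimes and taking the supremum over $|y|<\varepsilon$ yields the announced inequality.

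The argument is essentially routine once Lemma \ref{Lem:mean-value-thm} is available; the only point that needs some care is the bookkeeping of signs of $y$ and complex conjugation when applying the continuation formula, so that Lemma \ref{Lem:mean-value-thm} is used for $\Theta$ at the correct point $x-iy$ lying in the upper strip rather than at $x+iy$ (which would be outside the domain of the lemma).
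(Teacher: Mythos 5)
Your proof is correct and follows the same route as the paper: split the strip into $0\le y<\varepsilon$ (trivial $\mathcal H^2$ bound) and $-\varepsilon<y<0$ (write $f=\Theta\bar h$, use the continuation formula and the lower bound $|\Theta(x-iy)|\ge 1-\varepsilon\|\Theta'\|_\infty$ from Lemma \ref{Lem:mean-value-thm}, then integrate and use $\|h\|_2=\|f\|_2$). The sign/conjugation bookkeeping you flag is exactly the point to be careful about, and you handle it correctly.
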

\begin{proof}
It is clear that 
$$
\sup_{0\leq y<\varepsilon}\int_{-\infty}^{+\infty}|f(x+iy)|^2\,dx\leq \|f\|_2^2.
$$
Now assume that $-\varepsilon<y<0$. According to \eqref{eq2-Theta-minoration1} and \eqref{eq1-analytic-continuation}, we have 
\begin{eqnarray*}
|f(x+iy)|^2&=&\frac{|h(x-iy)|^2}{|\Theta(x-iy)|^2}\\
&\leq & \frac{|h(x-iy)|^2}{(1-\varepsilon \|\Theta'\|_\infty)^2}.
\end{eqnarray*}
Hence,  
\begin{eqnarray*}
\int_{-\infty}^{+\infty}|f(x+iy)|^2\,dx&\leq & \frac{1}{(1-\varepsilon \|\Theta'\|_\infty)^2} \int_{-\infty}^{+\infty}|h(x-iy)|^2\,dx \\
&\leq & \frac{\|h\|_2^2}{(1-\varepsilon \|\Theta'\|_\infty)^2}\\
&=&  \frac{\|f\|_2^2}{(1-\varepsilon \|\Theta'\|_\infty)^2}.
\end{eqnarray*}
\end{proof}

Recall that for a MIF $U$ and $\lambda\in\mathbb R$, the function 
$$
K_\lambda^U(z)=\frac{i}{2\pi}\frac{1-\overline{U(\lambda)}U(z)}{z-\bar\lambda}
$$
is the reproducing kernel of the space $\mathcal K_U$ and $2\pi \|K_\lambda^U\|_2^2=|U'(\lambda)|$. Hence, the function 
$$
k_\lambda^U(z)=\frac{i}{\sqrt{2\pi |U'(\lambda)|}} \frac{1-\overline{U(\lambda)}U(z)}{z-\bar\lambda}
$$
is the normalized reproducing kernel of $\mathcal K_U$. 

\begin{lem}\label{Lem:identite-clee}
Let $I$ and $\Theta$ be two MIF. Assume that $\{I=1\}=\{\lambda_n:n\geq 1\}$ and denote by $\eta_n=|I'(\lambda_n)|^{1/2} |\Theta'(\lambda_n)|^{-1/2}$, $n\geq 1$. Then, for every finitely supported sequence   of complex numbers $(a_n)_{n\geq 1}$, we have 
\begin{equation}
(1-I)\sum_{n\geq 1}a_n k_{\lambda_n}^\Theta=\sum_{n\geq 1}a_n \eta_n k_{\lambda_n}^I -\Theta \sum_{n\geq 1}a_n \eta_n \overline{\Theta(\lambda_n)} k_{\lambda_n}^I.
\end{equation}
\end{lem}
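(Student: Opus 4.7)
The plan is to observe that both sides of the claimed identity are linear in the coefficients $(a_n)$, and that the right-hand side can be rewritten as $\sum_{n\ge1} a_n \eta_n (1-\overline{\Theta(\lambda_n)}\Theta(z)) k_{\lambda_n}^I(z)$ by pulling $\Theta(z)$ back inside its sum. Thus it suffices to establish the termwise identity
\begin{equation*}
(1-I(z))\, k_{\lambda_n}^\Theta(z) \;=\; \eta_n\bigl(1-\overline{\Theta(\lambda_n)}\Theta(z)\bigr)\, k_{\lambda_n}^I(z), \qquad n\ge 1.
\end{equation*}

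The key simplification exploits the hypothesis $\{I=1\}=\{\lambda_n:n\ge 1\}$: each $\lambda_n$ is real, so $\bar\lambda_n=\lambda_n$, and $\overline{I(\lambda_n)}=1$. Substituting into the explicit formula for the normalized reproducing kernel, one gets
\begin{equation*}
k_{\lambda_n}^I(z) \;=\; \frac{i}{\sqrt{2\pi\, |I'(\lambda_n)|}}\cdot\frac{1-I(z)}{z-\lambda_n},
\end{equation*}
which I would rearrange as $\dfrac{1-I(z)}{z-\lambda_n} = \dfrac{\sqrt{2\pi\,|I'(\lambda_n)|}}{i}\, k_{\lambda_n}^I(z)$. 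This is the formula that lets $(1-I)$ be absorbed into a $k_{\lambda_n}^I$.

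Next I would plug the definition of $k_{\lambda_n}^\Theta$ into the left-hand side of the termwise identity:
\begin{equation*}
(1-I(z))\, k_{\lambda_n}^\Theta(z) \;=\; \frac{i}{\sqrt{2\pi\,|\Theta'(\lambda_n)|}}\bigl(1-\overline{\Theta(\lambda_n)}\Theta(z)\bigr)\cdot\frac{1-I(z)}{z-\lambda_n}.
\end{equation*}
Replacing the last fraction via the previous display collapses the right-hand side to $\sqrt{|I'(\lambda_n)|/|\Theta'(\lambda_n)|}\,(1-\overline{\Theta(\lambda_n)}\Theta(z))\, k_{\lambda_n}^I(z)$. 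Since $\eta_n = |I'(\lambda_n)|^{1/2}|\Theta'(\lambda_n)|^{-1/2}$, this is exactly the desired right-hand side.

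There is no genuine obstacle here: the identity is an algebraic rearrangement designed around the fact that the factor $(1-I(z))/(z-\lambda_n)$ produced by multiplying any reproducing kernel of $\mathcal K_\Theta$ by $1-I$ is, up to the scalar $\eta_n$, precisely the reproducing kernel $k_{\lambda_n}^I$. The only point worth double-checking is the bookkeeping of constants, and the normalization $\eta_n$ in the statement is chosen precisely to make the two $L^2$-normalizations agree. Summing the verified termwise identity against $(a_n)$ and splitting $(1-\overline{\Theta(\lambda_n)}\Theta)$ into its two pieces yields the stated formula.
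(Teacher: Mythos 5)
Your proof is correct and follows essentially the same route as the paper: both reduce to the termwise identity $(1-I)k_{\lambda_n}^\Theta = \eta_n(1-\overline{\Theta(\lambda_n)}\Theta)k_{\lambda_n}^I$, exploiting that $I(\lambda_n)=1$ and $\lambda_n\in\mathbb R$ to identify $(1-I(z))/(z-\lambda_n)$ with $\sqrt{2\pi|I'(\lambda_n)|}\,k_{\lambda_n}^I(z)/i$, then tracking the normalization constants to obtain $\eta_n$.
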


\begin{proof}
We have 
\begin{eqnarray*}
(1-I(z))k_{\lambda_n}^\Theta(z)&=&\frac{i}{\sqrt{2\pi}}\frac{(1-I(z))}{|\Theta'(\lambda_n)|^{1/2}}\frac{1-\overline{\Theta(\lambda_n)}\Theta(z)}{z-\bar\lambda_n}\\
&=&Ê\frac{|I'(\lambda_n)|^{1/2}}{|\Theta'(\lambda_n)|^{1/2}} (1-\overline{\Theta(\lambda_n)}\Theta(z)) \frac{i}{\sqrt{2\pi |I'(\lambda_n)|}}\frac{1-\overline{I(\lambda_n)}I(z)}{z-\bar\lambda_n}\\
&=& \eta_n (1-\overline{\Theta(\lambda_n)}\Theta(z)) k_{\lambda_n}^I(z).
\end{eqnarray*}
Hence 
$$
(1-I)k_{\lambda_n}^\Theta=\eta_n k_{\lambda_n}^I -\Theta \eta_n  \overline{\Theta(\lambda_n)} k_{\lambda_n}^I.
$$
\end{proof}
Assume that the real sequence $(\lambda_n)_{n\geq 1}$ satisfies the following two conditions:
\begin{equation}\label{eq:lambda_tecn1}
\delta:=\inf_{n\geq 1}(\lambda_{n+1}-\lambda_n)>0,
\end{equation}
and 
\begin{equation}\label{eq:lambda_tecn2}
\sup_{n}\left|\sum_{k\neq n}\left(\frac{1}{\lambda_n-\lambda_k}+\frac{\lambda_k}{\lambda_k^2+1}\right)\right|<\infty.
\end{equation}
Define the measure $\mu=\sum_{k=1}^\infty \nu_k \delta_{\lambda_k}$, where $(\nu_k)_{k\geq 1}$ is a sequence of positive real numbers satisfying $\sum_k \frac{\nu_k}{1+\lambda^2_k}<\infty$ and $\sup_k |\nu_k|<\infty$.
Let $G$ be the function defined by 
$$
G(z)=\sum_{k\geq 1}\nu_k \left(\frac{1}{\lambda_k-z}-\frac{\lambda_k}{\lambda_k^2+1}\right).
$$
Clearly, $G$ is analytic in $\mathbb C_+$ and $\Im(G(z))>0$, $z\in\mathbb C_+$. Therefore, the function 
\begin{equation}\label{eq:definition-I}
I=\frac{G-i}{G+i}
\end{equation}
is a meromorphic inner function in the upper half-plane and it is easy to see that $\{I=1\}=\{\lambda_n:n\geq 1\}$. The measure $\mu$ is the so-called Clark measure for $I$ and the sequence $(k_{\lambda_n}^I)_{n\geq 1}$ is an orthonormal basis for $\mathcal K_I$, see \cite{Clark}. Moreover, according to \cite[Lemma 5.2]{baranov-2006}, we have $I'\in L^\infty(\mathbb R)$ and $|I'(\lambda_n)|=2/\nu_n$. Recall that if $I$ is given by $I(z)=e^{iaz} B_Z(z) $, where $a\geq 0$ and $Z=(z_n)_n$ is the zero sequence of $I$, and $B_Z$ is the Blaschke product associated to $Z$, then for $t\in\mathbb R$, we have 
$$
|I'(t)|=a+2\sum_{k\geq 1}\frac{\Im(z_k)}{|t-z_k|^{2}}.
$$
Since $I'\in L^\infty(\mathbb R)$, Lemma~\ref{Lem:mean-value-thm} implies $\delta_1:=\inf_n(\Im(z_n))>0$. Now, it is not difficult to see that if $|t-\lambda_n|\leq \delta$ (where $\delta$ is the constant in \eqref{eq:lambda_tecn1}), then 
$$
\frac{1}{2(1+\frac{\delta^2}{\delta_1^2})}\leq \frac{|I'(t)|}{|I'(\lambda_n)|}=\frac{\nu_n}{2}|I'(t)|.
$$
Since $\mathbb R=\bigcup_n [\lambda_n-\delta,\lambda_n+\delta]$ and $\sup_n \nu_n<\infty$, we finally get 
\begin{equation}\label{eq:bounded-below-above}
|I'(t)|\asymp 1,\qquad t\in\mathbb R.
\end{equation}

We now extend a result from \cite{mitkovski2017basis}. Part 1 is already contained in that form in \cite[Lemma 5.4]{baranov-2006}.
\begin{lem}\label{Lem:bounded-below-T-1-I}
Let $\Lambda=(\lambda_n)_{n\geq 1}$ be a real sequence satisfying \eqref{eq:lambda_tecn1} and \eqref{eq:lambda_tecn2} and let $I$ be the meromorphic inner function defined by \eqref{eq:definition-I}. Let $\Theta$ be a meromorphic inner function such that $\Theta'\in L^\infty(\mathbb R)$.
\begin{enumerate}
\item If $f\in \mathcal K_\Theta$ and $f(\lambda_n)=0$, $n\geq 1$, then $f/(1-I)\in \mathcal K_\Theta$.
\item The Toeplitz operator $T_{1-I}:\mathcal K_\Theta\longrightarrow H^2$ is bounded below. 
\end{enumerate}
\end{lem}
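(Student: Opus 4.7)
The proof relies on analytic continuation into a horizontal strip about $\R$: since $\Theta'\in L^\infty(\R)$, Lemma~\ref{Lem:mean-value-thm} furnishes such a strip for $\Theta$, Lemma~\ref{lem:analytic-continuation} gives an $L^2$--bound for any $f\in\mathcal K_\Theta$ on horizontal lines, and the same holds for $I$ since $|I'|\asymp 1$ by~\eqref{eq:bounded-below-above}. A uniform lower bound on $|1-I|$ on the shifted line $\Im z=-\delta$ (for small $\delta$) comes from the local picture: $1-I$ is analytic in the strip with simple zeros exactly at the $\lambda_n$ on $\R$ (since $|I'(\lambda_n)|=2/\nu_n\neq 0$), and the Schwarz-reflection identity $I(x-i\delta)=1/\overline{I(x+i\delta)}$ forces $|I|\ge 1$ below $\R$; a local Taylor expansion combined with these facts yields $|1-I(x-i\delta)|\ge c\delta$ for all $x\in\R$.

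For part (1), since $|I|<1$ in $\mathbb C_+$ the function $1-I$ does not vanish there, so the quotient $g:=f/(1-I)$ is holomorphic in $\mathbb C_+$ whenever $f\in\mathcal K_\Theta$ vanishes at each $\lambda_n$; analyticity extends across $\R$ because the simple zeros of $f$ and $1-I$ match. To place $g$ in $H^2(\mathbb C_+)$ I would restrict to the line $\Im z=-\delta$: combining the bound above with Lemma~\ref{lem:analytic-continuation} gives
\[
\int_\R|g(x-i\delta)|^2\,dx\le\frac{1}{(c\delta)^2(1-\delta\|\Theta'\|_\infty)^2}\,\|f\|_2^2,
\]
so $g\in H^2(\{\Im z>-\delta\})\subset H^2(\mathbb C_+)$. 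The remaining condition $\Theta\bar g\in H^2$ is obtained from the anti-unitary involution $f\mapsto h:=\Theta\bar f$ of $\mathcal K_\Theta$: since $h(\lambda_n)=\Theta(\lambda_n)\overline{f(\lambda_n)}=0$, the same argument yields $h/(1-I)\in H^2$, and the pointwise identity $\overline{1-I}=-(1-I)/I$ on $\R$ translates into $\Theta\bar g=-Ih/(1-I)\in H^2$, completing the verification that $g\in\mathcal K_\Theta$.

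For part (2), we have $T_{1-I}f=(1-I)f\in H^2$ for $f\in\mathcal K_\Theta$ (as $1-I\in H^\infty$), and the content is the reverse inequality $\|f\|_2\le C\|(1-I)f\|_2$. The natural first step is again the contour shift: the pointwise inequality $|f(x-i\delta)|\le(c\delta)^{-1}|(1-I)f(x-i\delta)|$ yields, after squaring and integrating,
\[
\int_\R|f(x-i\delta)|^2\,dx\le\frac{1}{c^2\delta^2}\int_\R|(1-I)f(x-i\delta)|^2\,dx,
\]
and the left-hand side dominates $\|f\|_2^2$ thanks to the monotonicity of $\int|f(x+iy)|^2\,dx$ in $y$ on the Hardy space of the enlarged half-plane (which contains $f$ by Lemma~\ref{lem:analytic-continuation}). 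The main obstacle I expect is bounding the right-hand side of the last display by $\|(1-I)f\|_{H^2(\mathbb C_+)}^2$: the same Hardy-space monotonicity applied to $(1-I)f$ gives a comparison in the wrong direction. Circumventing this, in the spirit of~\cite{mitkovski2017basis}, likely requires accessing the $H^2(\mathbb C_+)$-norm of $(1-I)f$ on $\R$ through a duality/pairing argument rather than a naive contour shift---most plausibly via a Cauchy transform of $(1-I)f$ against the Clark measure of $I$, exploiting Lemma~\ref{Lem:identite-clee} and the fact that $(k_{\lambda_n}^I)_n$ is an orthonormal basis of $\mathcal K_I$.
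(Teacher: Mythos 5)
Your part (1) is plausible in outline (the paper itself simply cites Baranov's Lemma~5.4 for it), but two points would need tightening. The uniform lower bound $|1-I(x-i\delta)|\ge c\delta$ is not immediate from $|I|\ge1$ below $\mathbb R$; it requires a two-case argument (near the $\lambda_n$, using the two-sided bound $|I'|\asymp1$ from~\eqref{eq:bounded-below-above}; away from $\Lambda$, propagating $|1-I|\ge c_3$ from $\mathbb R$ using $I'\in L^\infty$). Also, $L^2$-integrability of $g$ on a single horizontal line together with analyticity in the strip does not by itself imply $g\in H^2$ of that strip: one also needs $g$ to lie in the Smirnov class of the strip, which holds here but should be said.

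The genuine gap is in part (2), and you correctly diagnose it yourself: the naive contour shift cannot close, because the Hardy-space mean monotonicity for $(1-I)f$ gives an inequality in the wrong direction. At that point your proof stops; the closing appeal to a ``duality/pairing argument via a Cauchy transform against the Clark measure of $I$'' is speculation, not an argument, and it is not what the paper does. The paper proves (2) by contradiction with a quantitative concentration estimate, entirely on the real line rather than on a shifted line. Suppose $\|f_k\|_2=1$ but $\|(1-I)f_k\|_2\to 0$. Using $|I'|\asymp1$ to control the increase of $\arg I$, one has $|1-I|\ge c_3>0$ off $E_\varepsilon=\bigcup_n(\lambda_n-\varepsilon,\lambda_n+\varepsilon)$, which forces $\int_{E_\varepsilon}|f_k|^2\ge\tfrac12$ for $k$ large. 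On the other hand, subharmonicity of $|f_k|^2$ combined with the uniform horizontal $L^2$ bound of Lemma~\ref{lem:analytic-continuation} gives $\int_{E_\varepsilon}|f_k|^2\le\frac{4}{3\pi}\,(1-4\varepsilon\|\Theta'\|_\infty)^{-2}$, and letting $\varepsilon\to0$ yields the contradiction $1\le 8/(3\pi)<1$. So the missing ingredient is not a Cauchy/Clark pairing but the lower bound on $|1-I|$ away from $\Lambda$ together with the subharmonic a priori estimate on $\sum_n|f_k(\lambda_n+t)|^2$.
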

Here, since $1-I$ is analytic, the Toeplitz operator $T_{1-I}$ is just the multiplication by $1-I$. 
\begin{proof}
As already mentioned (1) is exactly Lemma 5.4 from \cite{baranov-2006}. Let us now prove (2), that is 
$$
\inf_{f\in \mathcal K_\Theta,\|f\|_2=1}\|(1-I)f\|_2>0. 
$$
Assume on the contrary that there exists a sequence $(f_k)_{k\geq 1}$ in $\mathcal K_\Theta$, $\|f_k\|_2=1$, such that $\|(1-I)f_k\|_2\to 0$ as $k\to \infty$. Denote by $\phi$ an increasing branch of the argument of $I$. Since $\phi'(t)=|I'(t)|$, according to \eqref{eq:bounded-below-above}, there exists $c_1,c_2>0$ such that 
$$
c_1\leq \phi'(t)\leq c_2,\qquad t\in\mathbb R.
$$
Let us fix $\varepsilon>0$ such that 
$$ 
\varepsilon<\min\left(\frac{2\pi}{c_2},\frac{\delta}{6},\frac{1}{4\|\Theta'\|_\infty}\right).
$$
Then, for every $n\geq 1$, we have 
$$
0<c_1\varepsilon\leq \phi(\lambda_n+\varepsilon)-\phi(\lambda_n)=\int_{\lambda_n}^{\lambda_n+\varepsilon}\phi'(t)\,dt\leq c_2 \varepsilon<2\pi,
$$
and 
$$
0<c_1\varepsilon\leq \phi(\lambda_n)-\phi(\lambda_n-\varepsilon)=\int_{\lambda_n-\varepsilon}^{\lambda_n}\phi'(t)\,dt\leq c_2 \varepsilon<2\pi.
$$
Since $\{\lambda_n\}=\{t\in \mathbb R:e^{i\phi(t)}=1\}$, it is now easy to see that there exists $c_3>0$ such that for all $t\in\mathbb R$ satisfying $\mbox{dist}(t,\Lambda)\geq\varepsilon$, we have 
$$
|I(t)-1|\geq c_3.
$$
Let $E_\varepsilon=\bigcup_{n\geq 1}(\lambda_n-\varepsilon,\lambda_n+\varepsilon)$. Therefore, we get
$$
\int_{\mathbb R\setminus E_\varepsilon}|1-I(t)|^2|f_k(t)|^2\,dt\geq c_3^2 \int_{\mathbb R\setminus E_\varepsilon}|f_k(t)|^2\,dt.
$$
Choose now $k$ sufficiently large so that 
$$
\|(1-I)f_k\|_2\leq \frac{c_3}{\sqrt{2}},
$$
which gives 
$$
\int_{\mathbb R\setminus E_\varepsilon}|f_k(t)|^2\,dt\leq \frac{1}{2}.
$$
Using $1=\|f_k\|_2$, we then get 
$$
\frac{1}{2}\leq \int_{E_\varepsilon}|f_k(t)|^2\,dt.
$$
According to Lemma \ref{lem:analytic-continuation}, we know that $f_k$ is analytic on $\{z\in\mathbb C:\Im(z)>-4\varepsilon\}$ and 
$$
\sup_{|y|<4\varepsilon}\int_{-\infty}^{+\infty}|f_k(x+iy)|^2\,dx\leq \frac{1}{(1-4\varepsilon\|\Theta'\|_{\infty})^2}.
$$
Now let $t\in (-3\varepsilon,3\varepsilon)$. By subharmonicity of $|f_k|^2$, we have
\begin{eqnarray*}
\sum_{n\geq 1}|f_k(\lambda_n+t)|^2 &\leq  &\sum_{n\geq 1} \frac{1}{9\pi\varepsilon^2} \int_{B(\lambda_n+t,3\varepsilon)} |f_k(z)|^2\,dA(z) \\
&\leq & \sum_{n\geq 1} \frac{1}{9\pi\varepsilon^2}  \int_{-3\varepsilon}^{3\varepsilon} \int_{\lambda_n+t-3\varepsilon}^{\lambda_n+t+3\varepsilon}|f_k(x+iy)|^2\,dx\,dy. 
\end{eqnarray*}
\end{proof}
Using the fact that $|\lambda_{n+1}-\lambda_n|\geq \delta>6\varepsilon$, it follows that $[\lambda_{n}+t-3\varepsilon,\lambda_{n}+t+3\varepsilon]$ are disjoints intervals which gives
\begin{eqnarray*}
\sum_{n\geq 1}|f_k(\lambda_n+t)|^2 &\leq  & \frac{1}{9\pi\varepsilon^2}  \int_{-3\varepsilon}^{3\varepsilon}  \int_{-\infty}^{+\infty} |f_k(x+iy)|^2\,dx\,dy \\
&\leq & \frac{2}{3\pi\varepsilon} \frac{1}{(1-4\varepsilon \|\Theta'\|_\infty)^2}.
\end{eqnarray*}
Therefore, 
\begin{eqnarray*}
\frac{1}{2}\leq \int_{E_\varepsilon}|f_k(t)|^2\,dt&=&\sum_{n\geq 1} \int_{-\varepsilon}^{\varepsilon}|f_k(t+\lambda_n)|^2\,dt \\
&\leq & \frac{4}{3\pi}\frac{1}{(1-4\varepsilon \|\Theta'\|_\infty)^2}.
\end{eqnarray*}
Hence $(1-4\varepsilon \|\Theta'\|_\infty)^2\leq 8/(3\pi)$, and letting $\varepsilon\to 0$, we get a contradiction, which proves that the operator $T_{1-I}$ is bounded below as an operator from $\mathcal K_\Theta$ to $H^2$.

\section{Toeplitz operators of the form unitary plus compact}
In this section we revisit some classical results concerning the Toeplitz operators. As we will see our characterization of AOB we get in Section 5 will involve the condition that a certain Toeplitz operator is of the form unitary plus compact. It turns out that this property of Toeplitz operators has been characterized by Douglas \cite{douglas1973banach}. In this section, we revisit his result and explain how to derive Douglas's result from Hartman's theorem on the compactness of Hankel operators \cite{hartman1958on} and the Devinatz-Widom criterion for the invertibility of Toeplitz operators \cite{nikolski1986treatise,nikolski2009functions}. Most of this can be found in \cite{Peller} or \cite{nikolski1986treatise} in the context of the disc but we need the analogues for the upper half-plane. So we give some details for completeness. 
%
%

First, let us recall that if $u\in L^\infty(\mathbb R)$, then the \emph{Toeplitz operator} $T_u$ on the Hardy space of the upper half-plane $\mathcal H^2$ is defined by $T_u(f)=P_+(uf)$, $f\in\mathcal H^2$, where $P_+$ is the Riesz projection from $L^2(\mathbb R)$ onto $\mathcal H^2$. Its cousin, the \emph{Hankel operator}, $H_u$ is defined as an operator from $\mathcal H^2$ into $\mathcal H^2_-=L^2(\mathbb R)\ominus\mathcal H^2$ by $H_u(f)=P_-(uf)$, $f\in\mathcal H^2$, where $P_-=Id-P_+$. 

A well-known theorem of Hartman \cite[Corollary 8.5]{Peller} says that $H_u$ is compact if and only if $u\in H^\infty+C(\dot\R)$, where $H^\infty$ is the space of analytic and bounded functions of $\mathbb C_+$ and 
$$
C(\dot\R)=\{f \in C(\R): \displaystyle\lim_{x \rightarrow \pm \infty} f(x) \mbox { exist  and are equal}\}.
$$
It turns out that the space $H^\infty+C(\dot\R)$ is indeed a closed  subalgebra of $L^\infty(\R)$ (which is not obvious at the first sight but can be deduced from Hartman's theorem). We also recall a part of the Devinatz-Widom criterion for the invertibility of the Toeplitz operator. For the complete result, we refer the reader to (\cite{nikolski2009functions}, page 250). Let us recall that if $b\in L^1_\Pi$, that is 
$$
\int_{-\infty}^{\infty}\frac{|b(t)|}{1+t^2}\,dt<\infty,
$$ 
the Hilbert transform of $b$ is defined as the singular integral
$$
\tilde b(x)=\lim_{\varepsilon\to 0}\frac{1}{\pi}\int_{|x-t|>\varepsilon}\left(\frac{1}{x-t}+\frac{t}{1+t^2}\right)b(t)\,dt.
$$
\begin{thm}[Devinatz-Widom]
Let $u\in L^\infty(\mathbb R)$, $|u|=1$ a.e. on $\mathbb R$. The following are equivalent. 
\begin{enumerate}
\item The Toeplitz operator $T_u$ is invertible; 
\item there exist real valued bounded functions $a,b$ on $\mathbb R$ where $\|a\|_{\infty}<\pi/2$ and a real constant $c$ such that $u = e^{i(c+a+\tilde b)}$.
\item There is an outer function $h \in H^\infty$ such that $\|u-h\|_\infty < 1$
\end{enumerate}
\end{thm}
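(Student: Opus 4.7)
The plan is to prove the cycle (3) $\Rightarrow$ (1), (3) $\Leftrightarrow$ (2), and (1) $\Rightarrow$ (3), with the last being the genuinely non-trivial step.

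\emph{(3) $\Rightarrow$ (1).} Let $h \in H^\infty$ be outer with $\delta := \|u - h\|_\infty < 1$. Since $|u| = 1$ a.e., we get $|h| \geq 1 - \delta$, so $1/h \in H^\infty$ and $T_{\bar h}$ is invertible with inverse $T_{1/\bar h}$. A direct computation gives $\mathrm{Re}(u\bar h) = (1 + |h|^2 - |u - h|^2)/2 \geq 1 - \delta$ a.e. on $\R$. Because $\bar h$ is anti-analytic, $T_u T_{\bar h} = T_{u\bar h}$, and for $f \in \mathcal H^2$,
\begin{equation*}
\mathrm{Re}\,\langle T_{u\bar h} f, f\rangle = \langle T_{\mathrm{Re}(u\bar h)} f, f \rangle \geq (1 - \delta)\|f\|_2^2.
\end{equation*}
The same estimate applies to the adjoint $T_{\overline{u\bar h}}$, so $T_{u\bar h}$ is invertible, and consequently $T_u = T_{u\bar h}\,(T_{\bar h})^{-1}$ is invertible.

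\emph{(3) $\Leftrightarrow$ (2).} For (3) $\Rightarrow$ (2), keep the outer $h$ above. Since $h$ has no zeros in $\mathbb C_+$ and $\log|h| \in L^\infty(\R)$, the function $\log h$ is analytic on $\mathbb C_+$, and because $h$ is outer, its boundary imaginary part equals $\widetilde{\log|h|} + c_0$ for some real constant $c_0$. The estimate $\mathrm{Re}(u\bar h)/|h| \geq (1-\delta)/(1+\delta)$ shows that $a := \arg(u/h)$ lies in $L^\infty(\R)$ with $\|a\|_\infty < \pi/2$. Setting $b := -\log|h| \in L^\infty(\R)$ yields $u = e^{i\arg u} = e^{i(c_0 + a + \tilde b)}$. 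For (2) $\Rightarrow$ (3), set $\alpha := \cos\|a\|_\infty > 0$, $M := \|b\|_\infty$, $r := \alpha e^{-M}$, and $h := r\,e^{ic}\,e^{-b + i\tilde b}$, which is outer in $H^\infty$ since its modulus $re^{-b}$ and harmonic conjugate of $\log$-modulus are bounded. Then $|u - h|^2 = 1 - 2re^{-b}\cos a + r^2 e^{-2b}$, and because $re^{-b(x)} \in [\alpha e^{-2M}, \alpha]$ while $\cos a \geq \alpha$, this is uniformly bounded by $1 - \alpha^2 e^{-2M} < 1$.

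\emph{(1) $\Rightarrow$ (3).} Step 1: for unimodular $u$, the operator identity $T_u^* T_u = I - H_u^* H_u$ follows from $P_+ + P_- = I$, and invertibility of $T_u$ forces $T_u^* T_u \geq \varepsilon I$ for some $\varepsilon > 0$, whence $\|H_u\|^2 \leq 1 - \varepsilon < 1$. By Nehari's theorem, $\|H_u\| = \mathrm{dist}_{L^\infty}(u, H^\infty)$, so there exists $h_1 \in H^\infty$ with $\|u - h_1\|_\infty < 1$. Step 2: take the inner--outer factorization $h_1 = u_0 h$, with $u_0$ inner and $h$ outer in $H^\infty$. Then $\|\bar u_0 u - h\|_\infty = \|u - h_1\|_\infty < 1$, and the already proved (3) $\Rightarrow$ (1) applied to the unimodular symbol $\bar u_0 u$ shows that $T_{\bar u_0 u} = T_{\bar u_0} T_u$ is invertible. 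Combined with the invertibility of $T_u$, this forces $T_{\bar u_0}$ to be invertible; but $\ker T_{\bar u_0} = \mathcal K_{u_0}$, so $u_0$ must reduce to a unimodular constant. Absorbing it into $h$ yields an outer $h \in H^\infty$ with $\|u - h\|_\infty < 1$, which is (3). The main obstacle is this Step 2: one must promote the $H^\infty$-approximant produced by Nehari's theorem to an outer one, and the key maneuver is the bootstrap of applying the already-established (3) $\Rightarrow$ (1) to the modified symbol $\bar u_0 u$ in order to eliminate the inner factor $u_0$ a posteriori.
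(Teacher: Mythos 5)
The paper does not prove the Devinatz--Widom theorem; it simply states it and refers the reader to Nikolski's book (and explains how to transfer from the disc to the half-plane via the Cayley transform). So you are supplying a proof where the paper supplies a citation. Your chain (3)$\Rightarrow$(1), (3)$\Leftrightarrow$(2), (1)$\Rightarrow$(3) with Nehari plus the inner--outer bootstrap is a standard and clean route; the bootstrap in (1)$\Rightarrow$(3), where you apply the already-proved (3)$\Rightarrow$(1) to $\bar u_0 u$ to kill the inner factor of the Nehari approximant, is exactly the right idea and is correct.

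There is, however, a genuine slip in (3)$\Rightarrow$(1). You assert that $T_u T_{\bar h} = T_{u\bar h}$ ``because $\bar h$ is anti-analytic.'' This is the wrong side. The multiplicativity rule for Toeplitz operators is $T_\phi T_\psi = T_{\phi\psi}$ whenever $\bar\phi\in H^\infty$ \emph{or} $\psi\in H^\infty$, and the general identity is $T_{\phi\psi} - T_\phi T_\psi = H_{\bar\phi}^* H_\psi$. With $\phi=u$, $\psi=\bar h$ neither hypothesis holds and the defect $H_{\bar u}^* H_{\bar h}$ is not zero in general, so $T_u T_{\bar h}\neq T_{u\bar h}$. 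What is true is $T_{\bar h} T_u = T_{\bar h u}$, since taking $\phi=\bar h$ gives $\bar\phi=h\in H^\infty$ and hence $H_{\bar\phi}=H_h=0$. The fix is immediate: write $T_u = T_{\bar h}^{-1} T_{\bar h u}$ (not $T_{u\bar h}T_{\bar h}^{-1}$); your accretivity estimate applies to $T_{\bar h u}=T_{u\bar h}$ regardless, so the implication still goes through once the factor order is corrected.

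A smaller issue: your signs in (3)$\Leftrightarrow$(2) are the opposite of the paper's convention. The paper defines $\tilde b(x)$ with the kernel $\frac{1}{x-t}+\frac{t}{1+t^2}$; with that convention the boundary values of an outer $h$ satisfy $\arg h = \widetilde{\log|h|}+\mathrm{const}$, so in (3)$\Rightarrow$(2) one should take $b:=\log|h|$ (not $-\log|h|$), and in (2)$\Rightarrow$(3) the outer candidate should be $h:=r\,e^{ic}e^{\,b+i\tilde b}$ (your $e^{-b+i\tilde b}$ is the conjugate of an outer function, hence in $\overline{H^\infty}$ rather than $H^\infty$). This looks like a consistent use of the opposite Hilbert-transform sign convention, so nothing substantive breaks and the algebra $|u-h|^2 = 1+|h|^2-2|h|\cos a$ and the choice of the constant $r$ still work after flipping the sign of $b$; but as written the formulas do not match the paper's definition of $\tilde b$ and should be reconciled.
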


Note that in \cite{nikolski2009functions}, the result is expressed in the context of the unit disc $\mathbb D$. To transfer the result to the upper-half plane, it is sufficient to use the conformal map (Cayley transform) $\phi(z)=(z-i)/(z+i)$ from $\mathbb C_+$ onto $\mathbb D$, the map $\mathcal M(f)(t)=\frac{\sqrt 2}{t+i}(f\circ \phi)(t)$ for $f\in H^2(\mathbb D)$, which is a unitary map from the Hardy space of the unit onto the Hardy space of the upper-half plane, and note that $T_u=\mathcal M T_{u\circ\phi^{-1}}\mathcal M^{-1}$. 

The following lemma gives a sufficient condition for invertibility. 
\begin{lem}\label{invertible}
If $u=e^{i(\alpha + \tilde\beta)}$, for some real-valued functions $\alpha,\beta\in C(\dot\R)$, then $T_{u}$ is invertible.
\end{lem}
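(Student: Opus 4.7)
The plan is to apply the Devinatz-Widom theorem in the direction (2)$\Rightarrow$(1). It suffices to exhibit a real constant $c$ and real-valued bounded functions $a, b$ on $\R$ with $\|a\|_\infty < \pi/2$ such that $\alpha + \tilde\beta = c + a + \tilde b$; then $u = e^{i(c+a+\tilde b)}$ and the theorem yields invertibility of $T_u$.

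The obstacle is that typically $\|\alpha\|_\infty \geq \pi/2$, so the naive choice $a = \alpha$, $b = \beta$ fails: part of $\alpha$ must be rewritten as a Hilbert transform $\tilde b$ of some bounded function. The key analytic input will be that for $\phi \in C_c^\infty(\R)$, the modified Hilbert transform $\tilde\phi$ is bounded on $\R$ (indeed $\tilde\phi$ is smooth, and its principal-value part decays like $(\pi x)^{-1}\int\phi$ at infinity) and the identity $\tilde{\tilde\phi} = -\phi$ holds modulo an additive real constant. The latter is because the unmodified Hilbert transform $H$ satisfies $H^2 = -I$ on $L^2$, and the correction term $t/(1+t^2)$ appearing in the definition of $\tilde{\,\cdot\,}$ contributes only constants upon iteration.

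To carry out the construction, I first use that $\alpha \in C(\dot\R)$ to extract the common limit $\alpha_\infty = \lim_{|x|\to\infty}\alpha(x)$, so that $\alpha - \alpha_\infty \in C_0(\R)$. By density of $C_c^\infty(\R)$ in $C_0(\R)$, I choose $\phi \in C_c^\infty(\R)$ with $\|\alpha - \alpha_\infty - \phi\|_\infty < \pi/2$. Setting $a = \alpha - \alpha_\infty - \phi$ and $b = \beta - \tilde\phi$, both functions are real-valued and bounded (using $\beta \in L^\infty$ since $\beta \in C(\dot\R)$, and $\tilde\phi \in L^\infty$), with $\|a\|_\infty < \pi/2$. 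A direct computation using linearity of $\tilde{\,\cdot\,}$ and the identity $\tilde{\tilde\phi} = -\phi + \mathrm{const}$ then yields
\[ \alpha_\infty + a + \tilde b = \alpha - \phi + \tilde\beta - \tilde{\tilde\phi} = \alpha + \tilde\beta + \mathrm{const}. \]
Absorbing this additive constant into the choice of $c$ (i.e., taking $c = \alpha_\infty - \mathrm{const}$) produces the decomposition $\alpha + \tilde\beta = c + a + \tilde b$ demanded by Devinatz-Widom, which concludes the proof.
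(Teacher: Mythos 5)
Your proposal is correct and follows essentially the same path as the paper: extract the common limit $\alpha_\infty$ of $\alpha$, approximate $\alpha-\alpha_\infty$ uniformly within $\pi/2$ by a compactly supported smooth function $\phi$, fold $\phi$ into the Hilbert-transform part via $\tilde{\tilde\phi}=-\phi$ (the paper invokes Mashreghi for the exact identity on $C^1_c$, while you allow an additive constant and then absorb it into $c$ — an immaterial variant), and finish by Devinatz--Widom. The only cosmetic differences are your use of $C^\infty_c$ in place of $C^1_c$ and the slightly different bookkeeping of the constant.
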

\begin{proof}
Since $\alpha \in C(\dot\R)$, at $\pm \infty$, it admits the same limit, say $\ell$. Then, we can find a real-valued function $v\in C_c^1(\R)$, the space of compactly supported $C^1$ functions on $\R$, such that 
$$
\|\alpha-\ell-v\|_{\infty}<\pi/2.
$$
Note that since $v\in C^1_c(\R)$, $\tilde v$ is a real-valued continuous function which tends to zero at infinity and we have $\tilde{\tilde v}=-v$ (see \cite[Theorem 14.1 \& Corollary 14.9]{mashreghi}). 
Then,
$$
\alpha + \tilde\beta = \alpha-\ell-v+\widetilde{\beta-\tilde{v}}+\ell.
$$
Hence 
$$
u=e^{i\ell} e^{i(\alpha-\ell-v+\widetilde{\beta-\tilde{v}})}.
$$
We can apply the criterion of Devinatz-Widom to conclude that $T_u$ is invertible.  
\end{proof}

When we try to characterize the property for a Toeplitz operator to be unitary modulo the compacts, the class $QC$ of \emph{quasicontinuous functions} naturally appears:
$$
QC=(H^\infty+C(\dot\R))\cap \overline{(H^\infty+C(\dot\R))}.
$$

\begin{lem}[Douglas]\label{unimodcom}
Let $u \in L^\infty$, $|u|=1$ a.e. on $\mathbb R$. Then, $T_u$ is unitary modulo the compacts iff $u \in QC$ and $T_u$ is invertible.
\end{lem}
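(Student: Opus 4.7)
My plan is to reduce everything to the two operator identities
$$
T_u^* T_u = I - H_u^* H_u, \qquad T_u T_u^* = I - H_{\bar u}^* H_{\bar u},
$$
valid because $|u|=1$ a.e.\ on $\mathbb R$, and to combine them with Hartman's theorem (compactness of $H_u$ iff $u \in H^\infty + C(\dot{\mathbb R})$). This converts ``unitary modulo compacts'' into the compactness of both $H_u$ and $H_{\bar u}$ (equivalently $u \in QC$), while the invertibility of $T_u$ will require a separate argument.

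For the direction $(\Rightarrow)$, assume $T_u = U + K$ with $U$ unitary and $K$ compact. Expanding $T_u^* T_u$ and $T_u T_u^*$ shows each is $I$ plus a compact self-adjoint operator; by the identities above, $H_u^* H_u$ and $H_{\bar u}^* H_{\bar u}$ are compact. The positive square root of a compact positive operator is compact (Weierstrass approximation of $\sqrt{t}$ by polynomials vanishing at $0$), so $|H_u|$ and $|H_{\bar u}|$ are compact, and then $H_u$ and $H_{\bar u}$ are compact via their polar decompositions. Hartman's theorem then gives $u,\bar u \in H^\infty + C(\dot{\mathbb R})$, i.e.\ $u\in QC$. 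For invertibility of $T_u$, the identity $T_u = U + K$ immediately shows $T_u$ is Fredholm of index $0$. Since $|u|=1$ forces $u \neq 0$ a.e., Coburn's theorem (transferred to the upper half-plane via the Cayley map $\phi$, just as the authors transferred Devinatz--Widom) says $\ker T_u = \{0\}$ or $\ker T_u^* = \{0\}$; combined with index $0$, both kernels vanish, so $T_u$ is invertible.

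For the converse $(\Leftarrow)$, assume $u \in QC$ and $T_u$ invertible. Hartman gives $H_u$ and $H_{\bar u}$ compact, so $T_u^* T_u = I + K_0$ with $K_0 = -H_u^* H_u$ compact and self-adjoint. Invertibility of $T_u$ makes $T_u^* T_u$ positive and invertible, so $\mbox{spec}(K_0) \subset (-1,\infty)$ is bounded away from $-1$; applying the continuous functional calculus with $g(t) = \sqrt{1+t}-1$ (continuous on $\mbox{spec}(K_0)$ and vanishing at $0$) gives $|T_u| - I = g(K_0)$ compact. The polar decomposition $T_u = V|T_u|$ with $V$ unitary (since $T_u$ is invertible) then yields
$$
T_u = V + V\bigl(|T_u|-I\bigr),
$$
which displays $T_u$ as a unitary plus a compact operator.

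The main obstacle is the forward implication of invertibility: a compact perturbation of a unitary is automatically Fredholm of index $0$ but need not be invertible in an abstract setting, so one must invoke a Toeplitz-specific fact. Coburn's alternative is the cleanest such tool and appears unavoidable here, since Devinatz--Widom gives only a characterization of invertibility in terms of the symbol's form, which is hard to extract directly from the algebraic identity $T_u = U + K$. The functional-calculus step in the converse is routine but uses the positivity \emph{and} invertibility of $T_u^* T_u$ in an essential way, which is where the hypothesis ``$T_u$ invertible'' is spent.
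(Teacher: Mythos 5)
Your proof is correct and follows essentially the same route as the paper: in both directions it reduces to the operator identities $T_u^*T_u = I - H_u^*H_u$, $T_uT_u^* = I - H_{\bar u}^*H_{\bar u}$, uses Hartman's theorem to translate compactness of the Hankel operators into $u\in QC$, and closes the forward direction with Fredholm index zero plus Coburn's alternative. The only cosmetic difference is in the converse: you extract $|T_u|-I$ compact via the continuous functional calculus with $g(t)=\sqrt{1+t}-1$, whereas the paper uses the purely algebraic identity $I-|T_u| = (I+|T_u|)^{-1}(I-T_u^*T_u)$, which avoids the functional calculus and sidesteps the (unnecessary) discussion of the spectrum of $K_0$ being bounded away from $-1$ — note that $g$ is already continuous at $-1$, so the true role of invertibility is, as you also say, to make the polar factor $V$ unitary rather than merely a partial isometry.
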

\begin{proof}
First, let us assume that $u\in QC$ and $T_u$ is invertible. Using the polar decomposition of bounded invertible operators, we can write $$ T_u = U |T_u|,$$ where $U$ is a unitary operator and $|T_u|=(T^*_u T_u)^{\frac{1}{2}}$. Furthermore, we notice the following equality
$$ I - (T^*_u T_u)^{\frac{1}{2}} = (I + (T^*_u T_u)^{\frac{1}{2}})^{-1} (I-T^*_uT_u), $$
using the positivity of $(T^*_u T_u)^{\frac{1}{2}}$ to ensure that the operator $I + (T^*_u T_u)^{\frac{1}{2}}$ is invertible. We recall that $I-T^*_uT_u=H^*_uH_u$ and using Hartman's theorem, we get that $I-(T^*_uT_u)^{\frac{1}{2}}$ is compact. Hence, $|T_u|$ is identity modulo the compacts, ensuring that $T_u=U|T_u|$ is unitary modulo the compacts.

Conversely, let $T_u=U+K$, where $U$ is unitary and $K$ is compact. Then, $T_u^* = U^* + K^*$ and 
\begin{eqnarray*}
T_uT^*_u &=& (U+K)(U^*+K^*)\\
&=& I+K_1,
\end{eqnarray*}
where $K_1=UK^* + KU^* + KK^*$ is compact. Hence, $I-T_uT_u^*$ is compact. Similarly, we can show that $I-T_u^*T_u$ is compact. Thus, $H_u^*H_u= I-T_u^*T_u$ and $H^*_{\overline u}H_{\overline u}= I- T_uT^*_u$ are both compact. But we recall the fact that an operator $T$ is compact iff $T^*T$ is compact. This means that both $u$ and $\overline u$ belong to the space $H^\infty + C(\dot\R)$. That is to say, $u\in QC$. It remains to prove that $T_u$ is invertible. The representation $T_u = U+K$ allows us to reason that since $U$ is Fredholm with index $0$ and $K$ is compact, then $T_u$ is also Fredholm with index $0$. Similarly, $T^*_u$ is also Fredholm with index $0$. We now use Coburn's lemma to claim that either $\ker T_u=\{0\}$ or $\ker T^*_u=\{0\}$. Thus, either and hence both $T_u$ and $T^*_u$ are invertible. 
\end{proof}

We revisit the following theorem of Sarason's \cite{sarason1973algebras} that completely characterizes the unimodular functions that belong to the space QC. We note that the Cayley transform $\phi(z)=(z-i)/(z+i)$ is also a Blaschke factor vanishing at $i$. 
\begin{lem}[Sarason]\label{sarason}
Let $u\in L^\infty(\R)$ be a unimodular function. Then, $u \in QC$ if and only if $u=\phi^ne^{i(a+\tilde b)}$, for some $n\in \mathbb Z$ and two real valued functions $a,b \in C(\dot\R)$.
\end{lem}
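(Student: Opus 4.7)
The plan is to handle both directions. Sufficiency is a direct check that each factor lies in $QC$. For necessity I would transfer the problem to the unit disc via the Cayley map and invoke Sarason's original disc theorem; a self-contained alternative via the Fredholm index of $T_u$ is sketched at the end.

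For the sufficiency, first $\phi\in H^\infty$ while $\bar\phi(x)=(x+i)/(x-i)$ is continuous on $\mathbb R$ with common limit $1$ at $\pm\infty$, so $\bar\phi\in C(\dot\R)$; taking powers and conjugates places $\phi^n$ in $QC$ for every $n\in\mathbb Z$. Next, for real $a,b\in C(\dot\R)$ the analytic function on $\mathbb C_+$ whose boundary trace is $b+i\tilde b$ has real part equal to the Poisson extension of $b$ and is therefore bounded by $\|b\|_\infty$; hence $e^{\pm(b+i\tilde b)}\in H^\infty$ and
$$
e^{\pm i\tilde b}=e^{\mp b}\,e^{\pm(b+i\tilde b)}\in C(\dot\R)\cdot H^\infty\subset H^\infty+C(\dot\R).
$$
Combined with $e^{\pm ia}\in C(\dot\R)$ this shows $e^{i(a+\tilde b)}\in QC$, and since $QC$ is an algebra the product $\phi^n e^{i(a+\tilde b)}$ is in $QC$.

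For the necessity, the Cayley map $\phi\colon\mathbb C_+\to\mathbb D$ induces an isometric $*$-isomorphism $F\mapsto F\circ\phi$ of $L^\infty(\mathbb T)$ onto $L^\infty(\mathbb R)$ which sends $H^\infty(\mathbb D)$ onto $H^\infty(\mathbb C_+)$ and $C(\mathbb T)$ onto $C(\dot\R)$, identifying $QC(\mathbb T)$ with $QC(\dot\R)$. For a unimodular $u\in QC$, the pulled-back function $U:=u\circ\phi^{-1}$ is a unimodular element of $QC(\mathbb T)$, so Sarason's theorem on the disc \cite{sarason1973algebras} gives $U=z^n e^{i(A+\tilde B)}$ with $n\in\mathbb Z$ and real $A,B\in C(\mathbb T)$. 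Setting $a:=A\circ\phi$ and $b:=B\circ\phi$, both real and in $C(\dot\R)$, and using that harmonic conjugation commutes with the conformal map $\phi$ up to an additive constant (absorbed into $a$), one obtains $\widetilde{b}=\tilde B\circ\phi$ and the pulled-back factorization reads $u=\phi^n e^{i(a+\tilde b)}$.

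The main obstacle is precisely the commutation $\widetilde{B\circ\phi}=\tilde B\circ\phi$ modulo constants, which should be justified by transporting the analytic extension $B+i\tilde B$ on $\mathbb D$ through $\phi$ and matching boundary values. A self-contained alternative avoiding the disc statement would begin by showing $T_u$ is Fredholm (its Hankel parts $H_u,H_{\bar u}$ being compact by Hartman), then choose $n=-\mathrm{ind}(T_u)$ so that $T_{\bar\phi^n u}$ has index zero and is therefore invertible by Coburn's alternative as in the proof of Lemma~\ref{unimodcom}, apply Devinatz--Widom to extract a bounded argument $\alpha+\tilde\beta$, and finally upgrade $\alpha,\beta$ to elements of $C(\dot\R)$ via the real-valued form of Sarason's characterization of $QC$ --- this final upgrade being the genuinely delicate step.
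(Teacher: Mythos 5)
Your sufficiency argument is essentially the paper's: both split the candidate $u$ into an $H^\infty$ factor and a $C(\dot\R)$ factor and invoke the algebra structure of $H^\infty+C(\dot\R)$. For necessity you take a genuinely different route. Your primary argument transfers $u$ to the disc via the Cayley map, invokes Sarason's original disc-level characterization as a black box, and pulls the factorization back; the only real technical point is the commutation of conjugate functions with $\phi$, and your claim is correct with no constant discrepancy because the kernel $\tfrac{1}{x-t}+\tfrac{t}{1+t^2}$ normalizes the half-plane conjugate to vanish at $i=\phi^{-1}(0)$, matching the disc normalization at $0$. The paper instead proves the half-plane statement directly: after forcing $\mathrm{ind}\,T_v=0$ with $v=\overline{\phi^n}u$ and getting invertibility from Coburn, it applies Devinatz--Widom to produce an outer $h$ with $\|1-\bar v h\|_\infty<1$, then uses Douglas's lemma that invertible elements near $1$ in the Banach algebra $H^\infty+C(\dot\R)$ have logarithms there, writing $v/h=e^{f+g}$ with $g\in H^\infty$, $f\in C(\dot\R)$; unimodularity of $v$ then forces $b:=\log|h|+\Re g=-\Re f\in C(\dot\R)$ and yields $\tilde b=\widetilde{\log|h|}+\Im g-\Im g(i)$, with the remaining phase absorbed into $a\in C(\dot\R)$. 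This is precisely the ``genuinely delicate final upgrade'' that your self-contained alternative flags but does not resolve, so as written that sketch falls short; however your Cayley-transfer route is a complete and efficient alternative when the disc theorem is granted, whereas the paper's argument is self-contained in the half-plane and exposes the exact operator-theoretic ingredients it will reuse.
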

\begin{proof}
Let us assume that $u$ is of the form $\phi^ne^{i(a+\tilde b)}$. We can rewrite $u$ as follows. $$u=\phi^ne^{(b+i\tilde b)+(ia-b)}, $$
where $\phi^ne^{ia-b} \in C(\dot\R)$, $e^{b+i\tilde b} \in H^\infty$. Since $ H^\infty + C(\dot\R)$ is an algebra, we have that $u \in H^\infty + C(\dot\R)$. In a similar manner, we can show that $\overline u \in  H^\infty + C(\dot\R)$. Therefore, $u \in QC$.

Conversely, assume that $u \in QC$. We can easily check that the following relationships are true. 
\begin{eqnarray*}
I-T_uT_{\overline u} &=& H^*_{\bar u}H_{\bar u}\\
I-T_{\overline u}T_u &=& H^*_{ u}H_{u}.
\end{eqnarray*}
Since $u,\bar u\in H^\infty + C(\dot\R)$, by Hartman's theorem $H_u$ and $H_{\bar u}$ are compact. This means that $I-T_uT_{\bar u}$ and $I-T_{\bar u}T_u$ are both compact. Thus, $T_u$ is Fredholm. Let $n=-$ind${T_u}$. First let us assume that $n\geq 0$. Let us define a new function $$v:=\overline{\phi^n} u. $$ Then, $T_v=T_{\overline{\phi^n}}T_u$ is also Fredholm. Moreover, $$\mbox{ind}(T_{v})=\mbox{ind}(T_{\overline{\phi^n}}) +  \mbox{ind}(T_u)=n-n=0.$$
By Coburn's result, either $\ker T_v=\{0\}$ or $\ker T_{\bar v}=\{0\}$. Since the index is zero, we finally deduce that both kernels are trivial and  $T_v$ will be invertible. Thus, by the Devinatz-Widom theorem, there is an outer function $h\in H^\infty$ such that $$ \|v-h\|_{\infty}=\|1-\bar v h\|_{\infty}<1. $$ Consequently, $\bar v h$ has a logarithm in the Banach algebra $H^\infty + C(\dot\R)$ (see Lemma 2.13, page 34 in \cite{douglas1972banach}). Therefore, there is a $g\in H^\infty$ and $f\in C(\dot\R)$ such that 
$$\frac{1}{\bar v h}=\frac{v}{h}=e^{f+g}. $$
Hence, $$v=he^{f+g}=e^{ic+\log|h|+i\widetilde{log|h|}+f+g},$$
for some $c\in \R.$ Since $|v|=1$ a.e. on $\mathbb R$, it follows that $\log|h|+\Re f+\Re g=0.$ Let us denote $$b:=\log|h| + \Re(g) = -\Re(f) \in C(\dot\R). $$
Since, $g \in H^\infty$, we have 
$$\tilde b=\widetilde{\log|h|}+\Im(g)-\Im(g(i)). $$
Denoting $a:=\Im(f) + c + \Im(g(i)) \in C(\dot\R)$. Finally, we have that $$u=\phi^n v = \phi^n e^{i(a+\tilde b)}.
$$ In the case $n<0$, we let $v=\phi^n u$ and follow the same argument as before to arrive at the same conclusion.
\end{proof}

Finally, we can prove the following result.
\begin{thm}\label{umodcom2}
Let $u\in L^\infty(\R)$, $|u|=1$ a.e. on $\mathbb R$. The following are equivalent. 
\begin{enumerate}
\item $T_u$ is unitary modulo the compacts.
\item There exist real valued functions $a,b \in C(\dot\R)$ such that $u=e^{i(a+\tilde b)}$.
\end{enumerate}
\end{thm}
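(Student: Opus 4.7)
The theorem is almost a direct synthesis of the three preceding lemmas (Douglas, Sarason, and Lemma \ref{invertible}); the only real content left is a Fredholm-index computation used to eliminate the Blaschke factor in Sarason's representation. The plan is therefore to run the two implications in turn, using these lemmas as black boxes.

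For the implication $(2)\Rightarrow (1)$, suppose $u=e^{i(a+\tilde b)}$ with real-valued $a,b\in C(\dot\R)$. Lemma \ref{invertible} gives directly that $T_u$ is invertible. On the other hand, taking $n=0$ in Sarason's Lemma \ref{sarason} yields that $u\in QC$. Douglas's Lemma \ref{unimodcom} then assembles these two facts into the conclusion that $T_u$ is unitary modulo the compacts.

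For the implication $(1)\Rightarrow (2)$, assume $T_u=U+K$ with $U$ unitary and $K$ compact. By Lemma \ref{unimodcom}, $u\in QC$ and $T_u$ is invertible. By Sarason's Lemma \ref{sarason}, there exist $n\in\mathbb Z$ and real-valued $a,b\in C(\dot\R)$ such that $u=\phi^n e^{i(a+\tilde b)}$. The remaining task is to show $n=0$. Since $U$ is unitary it is Fredholm of index $0$, and a compact perturbation does not change the Fredholm index; hence $T_u$ is Fredholm of index $0$. The plan is then to compute $\mathrm{ind}(T_u)$ from the factorization: when $n\geq 0$ the factor $\phi^n$ belongs to $H^\infty$, so $T_{\phi^n e^{i(a+\tilde b)}}=T_{\phi^n}T_{e^{i(a+\tilde b)}}$, and the second factor is invertible by Lemma \ref{invertible}, so $\mathrm{ind}(T_u)=\mathrm{ind}(T_{\phi^n})=-n$ (since $\phi$ corresponds under the Cayley transform to the shift on $H^2(\mathbb D)$, which has Fredholm index $-1$). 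When $n<0$, we apply the same argument to $T_u^*=T_{\bar u}$ (or equivalently use $\bar\phi^{-n}\in H^\infty$) to obtain $\mathrm{ind}(T_u)=-n$ again. In either case $\mathrm{ind}(T_u)=0$ forces $n=0$, which gives $u=e^{i(a+\tilde b)}$ as required.

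The main (and really the only) obstacle is to justify the factorization of the Toeplitz operator and the additivity of the Fredholm index above. Both are classical: $T_{fg}=T_f T_g$ whenever $f\in H^\infty$, and index additivity holds for Fredholm Toeplitz operators on a product where one factor is analytic; these standard facts can be cited from \cite{nikolski2009functions} or \cite{Peller} after transferring from the disc via the Cayley map exactly as done earlier in this section.
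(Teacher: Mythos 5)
Your overall architecture matches the paper's exactly for $(2)\Rightarrow(1)$, and for $(1)\Rightarrow(2)$ you arrive at the same reduction (show $n=0$ in Sarason's representation) but dispatch it by a Fredholm index computation rather than the paper's kernel argument. The paper instead observes that, were $n>0$, the factorization $T_u=T_{u_1}T_{\phi^n}$ together with invertibility of $T_u$ and $T_{u_1}$ would force $T_{\overline{\phi^n}}=T^*_{\phi^n}$ to be invertible, which is absurd since its kernel is the nonzero space $\mathcal K_{\phi^n}$ (and symmetrically for $n<0$). Your index argument is a perfectly valid alternative and in fact slightly more automatic, since ``unitary plus compact $\Rightarrow$ index zero'' is immediate, whereas the paper must separately invoke the invertibility of $T_u$.

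There is, however, a genuine error in your stated factorization rule. You write ``$T_{fg}=T_fT_g$ whenever $f\in H^\infty$,'' and correspondingly factor $T_u=T_{\phi^n}T_{u_1}$ for $n\geq0$. That identity is false as stated: take $f=\phi$, $g=\overline{\phi}$, then $T_{fg}=I$ while $T_\phi T_{\overline{\phi}}$ has nontrivial cokernel. The correct rule is $T_\varphi T_\psi=T_{\varphi\psi}$ whenever $\psi\in H^\infty$ or $\overline{\varphi}\in H^\infty$, i.e., the analytic symbol must sit on the \emph{right} (or the coanalytic one on the left). For $n>0$ the correct factorization is therefore $T_u=T_{u_1}T_{\phi^n}$, not $T_{\phi^n}T_{u_1}$; for $n<0$ one has $\overline{\phi^n}\in H^\infty$ and correctly $T_u=T_{\phi^n}T_{u_1}$, which is the form the paper uses. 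This slip does not invalidate your conclusion, because the Fredholm index is additive under composition regardless of which order the factors appear in, and $T_{u_1}$ has index $0$; but the factorization identity you cite as ``classical'' is not the classical one, and should be corrected before the index computation is invoked.
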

\begin{proof}
First let us assume that there exist real valued functions $a,b \in C(\dot\R)$ such that $u=e^{i(a+\tilde b)}$. Using Lemma \ref{invertible}, we can conclude that $T_u$ is invertible. Furthermore, using Lemmas \ref{sarason} and \ref{unimodcom}, we can conclude that $T_u$ is indeed unitary modulo the compacts. 

Conversely, assume that $T_u$ is unitary modulo the compacts. It follows from Lemma \ref{unimodcom} that $u\in QC$ and $T_u$ is invertible. Next, using Lemma \ref{sarason}, we see that $u=\phi^ne^{i(a+\tilde b)},$ for some real valued functions $a,b \in C(\dot\R)$ and some $n\in \mathbb Z$. It remains to prove that $n=0$. Let us denote by $u_1 = e^{i(a+ \tilde b)}$. By Lemma \ref{invertible}, the Toeplitz operator $T_{u_1}$ is invertible. If $n>0$, then 
$$T_u = T_{u_1 \phi^n} = T_{u_1}T_{\phi^n}, $$
whence $T_{\overline{\phi^n}}=T^*_{\phi^n} = T^*_u (T^{-1}_{u_1})^*$ should also be invertible. But this is absurd because $\ker T_{\overline{\phi^n}}= K_{\phi^n}$. If $n<0$, then $T_u= T_{\phi^n}T_{u_1}$, that is to say 
$$ T_{\phi^n}=T_u (T_{u_1})^{-1}$$ is invertible, which is also absurd because $\ker T_{\phi^n}=K_{\phi^{-n}}.$
\end{proof}

\section{Riesz bases}
In this section we revisit a recent work of Mitkovski \cite{mitkovski2017basis} on systems of exponential systems which form a Riesz basis. We provide a generalization of his result, while maintaining the techniques of the proofs. 

\begin{thm}\label{Thm:analogue-mishko}
Let $(\lambda_n)_{n\geq 1}$ be a real sequence satisfying \eqref{eq:lambda_tecn1} and \eqref{eq:lambda_tecn2} and let $I$ be the meromorphic inner function defined by \eqref{eq:definition-I}. Let $\Theta$ be a meromorphic inner function such that $\Theta'\in L^\infty(\mathbb R)$ and $\inf_{n\geq 1}|\Theta'(\lambda_n)|>0$. Then,
\begin{enumerate}
\item $(k_{\lambda_n}^\Theta)_{n\geq 1}$ is complete in $\mathcal K_\Theta$ if and only if $T_{\Theta \bar I}:\HH^2\longrightarrow \HH^2$ has a dense range.
\item $(k_{\lambda_n}^\Theta)_{n\geq 1}$ is a Riesz sequence in $\mathcal K_\Theta$ if and only if $T_{\Theta \bar I}:\HH^2\longrightarrow \HH^2$ is injective with closed range.
\item $(k_{\lambda_n}^\Theta)_{n\geq 1}$ is a Riesz basis for $\mathcal K_\Theta$ if and only if $T_{\Theta \bar I}:\HH^2\longrightarrow \HH^2$ is invertible.
\end{enumerate}
\end{thm}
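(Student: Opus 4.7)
The plan is to adapt Mitkovski's argument from \cite{mitkovski2017basis}, combining Lemma~\ref{Lem:identite-clee} with Lemma~\ref{Lem:bounded-below-T-1-I} to rewrite the three basis conditions on $(k_{\lambda_n}^\Theta)$ as conditions on an auxiliary operator, and then identifying them with the three properties of $T_{\Theta\bar I}$ through a block decomposition.

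\emph{Step 1: Reduction to an auxiliary operator.} Under the hypotheses, the estimate $|I'(\lambda_n)|\asymp 1$ from \eqref{eq:bounded-below-above}, together with $\inf_n|\Theta'(\lambda_n)|>0$ and $\Theta'\in L^\infty(\mathbb R)$, yields $\eta_n\asymp 1$. Let $J\colon\ell^2\to\mathcal K_I$, $J(a)=\sum_n a_n\eta_n k_{\lambda_n}^I$, which is an isomorphism since $(k_{\lambda_n}^I)$ is an orthonormal basis of $\mathcal K_I$; let $M$ be the unitary on $\mathcal K_I$ defined on that basis by $M k_{\lambda_n}^I=\overline{\Theta(\lambda_n)}k_{\lambda_n}^I$ (well-defined since $|\Theta(\lambda_n)|=1$); and let $\Phi(a)=\sum_n a_n k_{\lambda_n}^\Theta$. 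Lemma~\ref{Lem:identite-clee} then becomes the single identity
\[
(1-I)\Phi(a)=KJ(a),\qquad KF:=F-\Theta MF,\quad K\colon\mathcal K_I\to\mathcal H^2.
\]
Since $T_{1-I}\colon\mathcal K_\Theta\to\mathcal H^2$ is bounded (trivially, as $\|1-I\|_\infty\leq 2$) and bounded below (Lemma~\ref{Lem:bounded-below-T-1-I}(2)), it is an isomorphism of $\mathcal K_\Theta$ onto $(1-I)\mathcal K_\Theta\subseteq\mathcal H^2$. Combined with $J$ being an isomorphism, this translates the completeness, Riesz-sequence, and Riesz-basis properties of $(k_{\lambda_n}^\Theta)$, respectively, into: $K$ having range dense in $(1-I)\mathcal K_\Theta$; $K$ being bounded below; $K$ being an isomorphism of $\mathcal K_I$ onto $(1-I)\mathcal K_\Theta$.

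\emph{Step 2: Block decomposition of $T_{\Theta\bar I}$.} With the two orthogonal decompositions $\mathcal H^2=\mathcal K_I\oplus I\mathcal H^2=\mathcal K_\Theta\oplus\Theta\mathcal H^2$, a direct computation shows that for $h=F+Ig$ with $F\in\mathcal K_I$ and $g\in\mathcal H^2$,
\[
T_{\Theta\bar I}h = AF+\Theta g,\qquad AF:=P_+(\Theta\bar IF),
\]
and that $AF$ in fact lies in $\mathcal K_\Theta$ (writing $F=I\bar G$ with $G\in\mathcal H^2$, the check $\bar\Theta\cdot AF\in\overline{\mathcal H^2}$ reduces to $P_+(\bar G)=0$ and $P_+(\bar\Theta P_-(\Theta\bar G))=0$, both immediate). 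Because the second block $Ig\mapsto\Theta g$ is a surjective isometry from $I\mathcal H^2$ onto $\Theta\mathcal H^2$, invertibility, injectivity-with-closed-range, and dense-range of $T_{\Theta\bar I}$ on $\mathcal H^2$ are respectively equivalent to the same three properties of $A\colon\mathcal K_I\to\mathcal K_\Theta$.

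\emph{Step 3: Bridging $A$ and $K$.} Multiplying the identity of Lemma~\ref{Lem:identite-clee} through by $\bar I$, using the pointwise relation $\bar I k_{\lambda_n}^I=\overline{k_{\lambda_n}^I}$ on $\mathbb R$ (a consequence of $I(\lambda_n)=1$), and applying $P_+$ gives
\[
A k_{\lambda_n}^I = \Theta(\lambda_n)\,\eta_n^{-1}\,T_{1-\bar I}k_{\lambda_n}^\Theta,
\]
noting that $T_{\bar I}$ preserves $\mathcal K_\Theta$ (hence so does $T_{1-\bar I}$). Since $|\Theta(\lambda_n)\eta_n^{-1}|\asymp 1$, this formula transfers each of the three properties of $A$ to the corresponding property of $(k_{\lambda_n}^\Theta)$, closing the chain of equivalences initiated in Step~1. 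The main technical hurdle is to verify that $T_{1-\bar I}\big|_{\mathcal K_\Theta}$ is actually an isomorphism of $\mathcal K_\Theta$: injectivity follows from the Wold decomposition of the isometry $T_I$ (which forces $T_{\bar I}^n\to 0$ strongly on $\mathcal H^2$, so $T_{\bar I}$ has no non-zero fixed vectors in $\mathcal K_\Theta$), while the bounded-below/surjectivity half is obtained by showing that the adjoint $P_{\mathcal K_\Theta}T_{1-I}\big|_{\mathcal K_\Theta}$ is injective, using Lemma~\ref{Lem:bounded-below-T-1-I}(1) together with the observation that any $F\in\mathcal K_I$ vanishing on the whole sequence $(\lambda_n)$ must be zero (since $k_{\lambda_m}^I(\lambda_n)=0$ for $m\neq n$, again by $I(\lambda_n)=1$).
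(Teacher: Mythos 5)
Your block-decomposition approach is a genuinely different architecture from the paper's. The paper proves (1) directly: if $T_{\Theta\bar I}$ has dense range and $f\in\mathcal K_\Theta$ vanishes on every $\lambda_n$, then Lemma~\ref{Lem:bounded-below-T-1-I}(1) puts $f/(1-I)$ in $\mathcal K_\Theta$, producing a nonzero vector in $\ker T_{I\bar\Theta}$; the converse is similar. It proves (2) by an approximate-eigenvector argument, constructing sequences $(g_\ell)$, $(f_\ell)$ and manipulating the identity of Lemma~\ref{Lem:identite-clee} (essentially your Step~3 formula, but used termwise and at the level of norms), without ever passing to a global operator equation. Your route instead packages everything into the diagonal block structure of $T_{\Theta\bar I}$ relative to $\mathcal K_I\oplus I\mathcal H^2\to\mathcal K_\Theta\oplus\Theta\mathcal H^2$, reducing the theorem to properties of the corner $A=P_+(\Theta\bar I\,\cdot\,)\colon\mathcal K_I\to\mathcal K_\Theta$. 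Steps~1 and~2 of your proposal are correct (Step~1 is in fact redundant once Step~3 is in place), and the identity $Ak_{\lambda_n}^I=\Theta(\lambda_n)\eta_n^{-1}T_{1-\bar I}k_{\lambda_n}^\Theta$ of Step~3 checks out. This approach is cleaner conceptually, in exchange for requiring one hard auxiliary fact.

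That auxiliary fact is exactly where your argument has a gap. You need $T_{1-\bar I}\big|_{\mathcal K_\Theta}$ to be an isomorphism of $\mathcal K_\Theta$ onto itself, and the justification you sketch does not deliver it. You correctly get injectivity from $T_{\bar I}^n\to 0$ strongly. But you then claim ``the bounded-below/surjectivity half is obtained by showing that the adjoint $P_{\mathcal K_\Theta}T_{1-I}\big|_{\mathcal K_\Theta}$ is injective''; injectivity of the adjoint only yields dense range, and combined with injectivity of the operator itself gives nothing close to invertibility (consider any injective, non-surjective operator with dense range and trivial cokernel). You also cite Lemma~\ref{Lem:bounded-below-T-1-I}(1) here, but what is actually needed is part~(2). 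The claim \emph{is} true, and can be proved as follows. Bounded below: if $f_n\in\mathcal K_\Theta$, $\|f_n\|=1$, and $\|(1-T_{\bar I})f_n\|\to 0$, then since $T_IT_{\bar I}=Id-P_{\mathcal K_I}$ one gets $\|P_{\mathcal K_I}f_n\|^2=1-\|T_{\bar I}f_n\|^2\to 0$, and applying the isometry $T_I$ to $(1-T_{\bar I})f_n\to 0$ gives $If_n-f_n+P_{\mathcal K_I}f_n\to 0$, hence $(1-I)f_n\to 0$ in $L^2$, contradicting Lemma~\ref{Lem:bounded-below-T-1-I}(2). Adjoint injectivity: if $P_\Theta\big((1-I)f\big)=0$ with $f\in\mathcal K_\Theta$, write $f=\Theta\bar g$ with $g\in\mathcal H^2$; then $(1-I)\bar g\in\mathcal H^2$, so taking conjugates and applying $P_+$ to $(1-\bar I)g\in\overline{\mathcal H^2}$ gives $g=T_{\bar I}g$, hence $g=0$ by the same Wold argument. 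With bounded-below plus injective adjoint in hand, invertibility follows and your chain closes. So the plan is salvageable, but as written the key lemma is both under-justified and attributed to the wrong part of Lemma~\ref{Lem:bounded-below-T-1-I}.
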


\begin{proof}
Let us first show that the upper inequality in Riesz basis property \eqref{eq:defn-RS} is satisfied for $(k_{\lambda_n}^\Theta)_n$ under the hypothesis of the theorem. Indeed, according to Lemma~\ref{Lem:identite-clee}, we have 
$$
(1-I)\sum_{n\geq 1}a_n k_{\lambda_n}^\Theta=\sum_{n\geq 1}a_n\eta_n k_{\lambda_n}^I -\Theta \sum_{n\geq 1}a_n \eta_n \overline{\Theta(\lambda_n)} k_{\lambda_n}^I,
$$
where $\eta_n=|I'(\lambda_n)|^{1/2} |\Theta'(\lambda_n)|^{-1/2}\asymp 1$ by hypothesis and \eqref{eq:bounded-below-above}. 
Using Lemma~\ref{Lem:bounded-below-T-1-I}, 
\begin{eqnarray*}
\left\|\sum_{n\geq 1}a_n k_{\lambda_n}^\Theta\right\|_2&\lesssim& \left\|(1-I)\sum_{n\geq 1}a_n k_{\lambda_n}^\Theta\right\|_2\\
&\leq & \left\|\sum_{n\geq 1}a_n \eta_n k_{\lambda_n}^I\right\|_2+\left\|\sum_{n\geq 1}a_n \eta_n \overline{\Theta(\lambda_n)}k_{\lambda_n}^I\right\|_2.
\end{eqnarray*}
Then, using the fact that $(k_{\lambda_n}^I)_{n\geq 1}$ is an orthonormal basis for $\mathcal K_I$ and $|\Theta(\lambda_n)|=1$, $n\geq 1$, we get 
\begin{equation}\label{eq:upper-inequality-rb}
\left\|\sum_{n\geq 1}a_n k_{\lambda_n}^\Theta\right\|_2  \lesssim \left(\sum_{n\geq 1}|a_n|^2\right)^{1/2}.
\end{equation}
\\

Let us  now prove (1). First, assume that $T_{\Theta\bar I}$ has dense range and assume that there exists $f\in \mathcal K_\Theta$, $f(\lambda_n)=0$, $n\geq 1$, and $f\not\equiv 0$. According to Lemma~\ref{Lem:bounded-below-T-1-I}, the function $g:=f/(1-I)$ belongs to $\mathcal K_\Theta$ and, since $\mathcal K_\Theta=\HH^2\cap\Theta\overline{\HH^2}$, $\bar\Theta Ig=-\bar\Theta (1-I)g+\bar\Theta g=-\bar\Theta f+\bar\Theta g\in\overline{\HH^2}$. In particular, 
$T_{I\bar\Theta}g=0$ and $T_{I\bar\Theta}$ is not injective which contradicts the fact that $T_{\Theta\bar I}=T_{I\bar\Theta}^*$ has dense range. Conversely, suppose that $T_{\Theta\bar I}$ does not have a dense range. Then, there is a function $g\in\ker(T_{I\bar\Theta})$, $g\not\equiv 0$. Hence $Ig\bar\Theta\in\overline{\HH^2}$, which gives that both $g$ and $Ig$ belong to $\mathcal K_\Theta$. So $f:=(1-I)g$ is also in $\mathcal K_\Theta$, $f\not\equiv 0$ and 
$$
f(\lambda_n)=(1-I(\lambda_n))g(\lambda_n)=0,\qquad n\geq 1,
$$
which proves that $(k_{\lambda_n}^\Theta)_{n\geq 1}$ is not complete in $\mathcal K_\Theta$. 
\\

Let us now prove (2). Assume that $(k_{\lambda_n}^\Theta)_{n\geq 1}$ is not a Riesz sequence in $\mathcal K_\Theta$. According to \eqref{eq:upper-inequality-rb}, it means that the lower inequality in \eqref{eq:defn-RS} is not satisfied. Then, for any $\varepsilon>0$, there exists $\{b_n\}_{n\geq 1}\in\ell^2$ such that 
$$
\left\|\sum_{n\geq 1}b_n k_{\lambda_n}^\Theta\right\|^2<\varepsilon \sum_{n\geq 1}|b_n|^2.
$$
Take $a_n=b_n/\|b_n\|_{\ell^2}$. Then $\sum_{n\geq 1}|a_n|^2=1$ and 
$$
\left\|\sum_{n\geq 1}a_n k_{\lambda_n}^\Theta\right\|^2<\varepsilon. 
$$
In particular, we can construct a sequence $(a_n^\ell)_{n\geq 1}$ of elements of $\ell^2$ such that for every $\ell\geq 1$, $\sum_{n\geq 1}|a_n^\ell|^2=1$ and 
$$
\left\|\sum_{n\geq 1}a_n^\ell k_{\lambda_n}^\Theta\right\|_2\to 0,\quad \mbox{as }\ell\to\infty.
$$
Let $g_\ell:=\sum_{n\geq 1}a_n^\ell k_{\lambda_n}^\Theta$. Then $g_\ell\in \mathcal K_\Theta$ and $\|g_\ell\|_2\to 0$ as $\ell\to\infty$. According to Lemma~\ref{Lem:identite-clee}, we have 
$$
(1-I)g_\ell=\sum_{n\geq 1}a_n^\ell \eta_n k_{\lambda_n}^I -\Theta\sum_{n\geq 1}a_n^\ell \eta_n \overline{\Theta(\lambda_n)}k_{\lambda_n}^I.
$$
Multiply by $\bar I$ and take the Riesz projection gives 
$$
P_+(\bar I(1-I)g_\ell)=\sum_{n\geq 1}a_n^\ell \eta_n P_+(\bar I k_{\lambda_n}^I)-P_+(\Theta\bar I\sum_{n\geq 1}a_n^\ell \eta_n \overline{\Theta(\lambda_n)}k_{\lambda_n}^I).
$$
But $k_{\lambda_n}^I\in \mathcal K_I=\HH^2\cap I\overline{\HH^2}$, hence $P_+(\bar I k_{\lambda_n}^I)=0$ and, then 
$$
P_+(\bar I(1-I)g_\ell)=-T_{\Theta \bar I}(\sum_{n\geq 1}a_n^\ell \eta_n \overline{\Theta(\lambda_n)}k_{\lambda_n}^I).
$$
On one hand, we have
$$
\|P_+(\bar I(1-I)g_\ell)\|_2\to 0,\quad \mbox{as }\ell\to\infty.
$$
And on the other hand, since $\Theta'\in L^\infty(\mathbb R)$, we also have
\begin{eqnarray*}
\left\|\sum_{n\geq 1}a_n^\ell \eta_n \overline{\Theta(\lambda_n)}k_{\lambda_n}^I \right\|_2^2&=&\sum_{n\geq 1}|a_n^\ell|^2 \eta_n^2 |\Theta(\lambda_n)|^2 \\
&=&\sum_{n\geq 1}|a_n^\ell|^2 \eta_n^2 \\
&\gtrsim & \sum_{n\geq 1}|a_n^\ell|^2=1,
\end{eqnarray*}
which proves that $T_{\Theta\bar I}$ is not bounded below.  Conversely, assume that $T_{\Theta\bar I}$ is not bounded below. Hence there exists a sequence $(f_\ell)_{\ell\geq 1}$, $f_\ell\in\HH^2$, $\|f_\ell\|_2=1$, such that $\|T_{\Theta\bar I}f_\ell\|_2\to 0$, as $\ell\to \infty$. Let $h_\ell=T_{\Theta \bar I}f_\ell$. Then, $\|h_\ell\|_2\to 0$ as $\ell\to\infty$, and $g_\ell:=\Theta \bar I f_\ell-h_\ell\in \overline{\HH^2}$ with $\|g_\ell\|_2\asymp 1$ (because $\|f_\ell\|_2=1$ and $\|h_\ell\|_2\to 0$). Note that 
$$
Ig_\ell=\Theta f_\ell-Ih_\ell\in \HH^2\cap I\overline{\HH^2}=\mathcal K_I.
$$
Since $(k_{\lambda_n}^I)_{n\geq 1}$ is an orthonormal basis for $\mathcal K_I$, there exists $(g_n^\ell)_{n\geq 1}\in\ell^2$ such that 
$$
Ig_\ell=\sum_{n\geq 1}g_n^\ell k_{\lambda_n}^I.
$$ 
Note also that
$$
\sum_{n\geq 1}|g_n^\ell|^2=\|Ig_\ell\|_2^2=\|g_\ell\|_2^2\asymp 1.
$$
Then
$$
Ih_\ell=\Theta f_\ell-I g_\ell=\Theta f_\ell-\sum_{n\geq 1}g_n^\ell k_{\lambda_n}^I,
$$
and using one more time Lemma~\ref{Lem:identite-clee}, we get
\begin{equation}\label{eq22334:preuve-thm-riesz-basis}
Ih_\ell=\Theta f_\ell-\left((1-I)\sum_{n\geq 1}\frac{g_n^\ell}{\eta_n}k_{\lambda_n}^\Theta+\Theta\sum_{n\geq 1}g_n^\ell \overline{\Theta(\lambda_n)} k_{\lambda_n}^I\right).
\end{equation}
Thus
$$
\Theta f_\ell-\Theta\sum_{n\geq 1}g_n^\ell \overline{\Theta(\lambda_n)} k_{\lambda_n}^I-\sum_{n\geq 1}\frac{g_n^\ell}{\eta_n}k_{\lambda_n}^\Theta=Ih_\ell-I\sum_{n\geq 1}\frac{g_n^\ell}{\eta_n}k_{\lambda_n}^\Theta.
$$
The difference of the first two terms on the left side is in $\Theta\HH^2$, while the third term is in $\mathcal K_\Theta$. Using orthogonality and triangle's inequality, we obtain
\begin{eqnarray*}
\left\| \Theta f_\ell-\Theta\sum_{n\geq 1}g_n^\ell \overline{\Theta(\lambda_n)} k_{\lambda_n}^I\right\|_2^2+\left\| \sum_{n\geq 1}\frac{g_n^\ell}{\eta_n}k_{\lambda_n}^\Theta\right\|_2^2 &\leq & \left(\|Ih_\ell\|_2+ 
\left\| I \sum_{n\geq 1}\frac{g_n^\ell}{\eta_n}k_{\lambda_n}^\Theta\right\|_2\right)^2 \\
&=&\left(\|h_\ell\|_2+ \left\| \sum_{n\geq 1}\frac{g_n^\ell}{\eta_n}k_{\lambda_n}^\Theta\right\|_2\right)^2.
\end{eqnarray*}
Since by \eqref {eq:upper-inequality-rb}, we have
$$
\left\| \sum_{n\geq 1}\frac{g_n^\ell}{\eta_n}k_{\lambda_n}^\Theta\right\|_2\lesssim \left(\sum_{n\geq 1}\frac{|g_n^\ell|^2}{\eta_n^2}\right)^{1/2}\lesssim 1, 
$$
and since $\|h_\ell\|_2\to 0$ as $\ell\to\infty$, we deduce 
$$
\left\|\Theta f_\ell-\Theta \sum_{n\geq 1}g_n^\ell \overline{\Theta(\lambda_n)}k_{\lambda_n}^I \right\|_2\to 0,\qquad \mbox{as }\ell \to\infty.
$$
Using this and \eqref{eq22334:preuve-thm-riesz-basis}, we obtain
$$
\left\|(1-I)\sum_{n\geq 1}\frac{g_n^\ell}{\eta_n}k_{\lambda_n}^\Theta\right\|_2\to 0,\quad \mbox{as }\ell\to\infty.
$$
Lemma~\ref{Lem:bounded-below-T-1-I} implies now that 
$$
\left\|\sum_{n\geq 1}\frac{g_n^\ell}{\eta_n}k_{\lambda_n}^\Theta\right\|_2\to 0,\quad \mbox{as }\ell\to\infty,
$$
which together with
$$
\sum_{n\geq 1}\frac{|g_n^\ell|^2}{\eta_n^2}\asymp \sum_{n\geq 1}|g_n^\ell|^2\asymp 1,
$$
implies that $(k_{\lambda_n}^\Theta)_{n\geq 1}$ cannot be a Riesz sequence in $\mathcal K_\Theta$. 

The part (3) follows immediately from (1) and (2). 

\end{proof}

\section{Asymptotically orthonormal bases.}

In that section, we give a result on sequences of reproducing kernels which form an AOB. We need to recall the notion of angle between two closed subspaces $\mathcal F,\mathcal G$ of an Hilbert space $\mathcal H$. The angle $\langle \mathcal F,\mathcal G \rangle$ between $\mathcal F$ and $\mathcal G$ is defined by the condition:
$$
\langle \mathcal F,\mathcal G\rangle\in [0,\pi/2]\quad \cos\langle \mathcal F,\mathcal G\rangle=\sup_{f\in\mathcal F,g\in\mathcal G}\frac{|\langle f,g \rangle|}{\|f\|\|g\|}.
$$
It is easy to see that $\cos\langle \mathcal F,\mathcal G\rangle=\|P_{\mathcal F} P_{\mathcal G}\|$, where $P_{\mathcal F}$ and $P_{\mathcal G}$ are the corresponding orthogonal projections. 
\begin{thm}
Let $(\lambda_n)_{n\geq 1}$ be a real sequence satisfying \eqref{eq:lambda_tecn1} and \eqref{eq:lambda_tecn2} and let $I$ be the meromorphic inner function defined by \eqref{eq:definition-I}. Let $\Theta$ be a meromorphic inner function such that $\Theta'\in L^\infty(\mathbb R)$ and $\eta_n=|I'(\lambda_n)|^{1/2} |\Theta'(\lambda_n)|^{-1/2}\to 1$ as $n\to\infty$. Then, the following assertions are equivalent: 
\begin{enumerate}
\item there exists two real-valued functions $\alpha,\beta\in C(\dot\R)$ satisfying 
$$
\arg\Theta-\arg I=\alpha+\widetilde\beta;
$$
\item the operator $T_{\Theta \bar I}$ is of the form unitary plus compact.
\item \begin{enumerate}
\item[(i)] the sequence $(k_{\lambda_n}^\Theta)_{n\geq 1}$ is a complete AOB for $\mathcal K_\Theta$;
\item [(ii)]$
\cos\langle \overline{\mbox{span}}(k_{\lambda_n}^\Theta:n\geq N), I\HH^2\rangle\to 0\quad\mbox{as }N\to\infty.
$
\end{enumerate}
\end{enumerate}
\end{thm}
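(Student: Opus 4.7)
The equivalence $(1)\Leftrightarrow(2)$ is immediate from Theorem~\ref{umodcom2} applied to the unimodular symbol $u=\Theta\bar I$: indeed $|u|=1$ a.e.\ on $\R$ and $\arg u=\arg\Theta-\arg I$, so $T_u$ is unitary plus compact exactly when (1) holds. The substantive work is therefore $(2)\Leftrightarrow(3)$, and the plan is to handle both directions by quantitatively refining the proof of Theorem~\ref{Thm:analogue-mishko}, with the identity of Lemma~\ref{Lem:identite-clee} as the dictionary between the reproducing-kernel side and the Toeplitz side.

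For $(2)\Rightarrow(3)$, write $T_{\Theta\bar I}=U+K$ with $U$ unitary and $K$ compact. Lemma~\ref{unimodcom} yields invertibility, so Theorem~\ref{Thm:analogue-mishko}(3) immediately produces the Riesz basis property for $(k_{\lambda_n}^\Theta)$ in $\mathcal K_\Theta$, which already gives the completeness in (i). To upgrade to AOB, set $g_N=\sum_{n\geq N}a_n k_{\lambda_n}^\Theta$ and apply Lemma~\ref{Lem:identite-clee} to write $(1-I)g_N=A_N-\Theta B_N$, where $A_N,B_N\in\mathcal K_I$ with $\|A_N\|_2=\|B_N\|_2$ and $\|B_N\|_2^2=\sum_{n\geq N}|a_n|^2\eta_n^2$. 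Multiplying by $\bar I$ and applying $P_+$ yields $P_+(\bar I(1-I)g_N)=-T_{\Theta\bar I}B_N$, which combined with the lower bound for $T_{1-I}$ from Lemma~\ref{Lem:bounded-below-T-1-I} expresses the tail Riesz constants in terms of $T_{\Theta\bar I}$ restricted to $\mathcal M_N:=\overline{\mbox{span}}(k_{\lambda_n}^I:n\geq N)\subset\mathcal K_I$. Since $P_{\mathcal M_N}\to 0$ strongly and $K$ is compact, both $\|KP_{\mathcal M_N}\|$ and $\|P_{\mathcal M_N}K\|$ tend to $0$, and together with $\eta_n\to 1$ this drives the AOB constants $c_N,C_N$ in \eqref{eq:AOB} to $1$. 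The angle condition (ii) is handled by the same scheme applied to the ratio $\|P_{I\HH^2}g_N\|_2/\|g_N\|_2$: via Lemma~\ref{Lem:identite-clee} it reduces to an asymptotic quantity on $\mathcal M_N$ that is annihilated by the compact part of $T_{\Theta\bar I}$.

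For $(3)\Rightarrow(2)$, Theorem~\ref{Thm:analogue-mishko}(3) already supplies the invertibility of $T_{\Theta\bar I}$ from the Riesz content embedded in the AOB, so by Lemma~\ref{unimodcom} it remains to prove $\Theta\bar I\in QC$; by Hartman's theorem this is equivalent to compactness of the two defect operators $I-T_{\Theta\bar I}^*T_{\Theta\bar I}$ and $I-T_{\Theta\bar I}T_{\Theta\bar I}^*$. Reading Lemma~\ref{Lem:identite-clee} backwards, the AOB hypothesis (i) translates into $\|T_{\Theta\bar I}B_N\|_2/\|B_N\|_2\to 1$ uniformly on tails $B_N\in\mathcal M_N$, while the angle condition (ii) controls the remaining component of the range of $T_{\Theta\bar I}$ lying inside $I\HH^2$. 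Together these force the defect operators to have vanishing norm on $\mathcal M_N$ as $N\to\infty$, which promotes them from being asymptotically small to being genuinely compact on $\HH^2$. The main obstacle is precisely this last direction: converting the purely \emph{asymptotic} information of (3) into the \emph{honest compactness} demanded by Hartman's theorem. The angle hypothesis (ii) is included exactly to catch the piece of $T_{\Theta\bar I}$ that is not directly governed by the orthonormal basis $(k_{\lambda_n}^I)_n$ of $\mathcal K_I$, so that both defect operators can be bounded simultaneously.
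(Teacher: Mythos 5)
Your treatment of the equivalence $(1)\Leftrightarrow(2)$ is exactly the paper's: it is simply Theorem~\ref{umodcom2} applied to the unimodular symbol $\Theta\bar I$. The difficulties all lie in $(2)\Leftrightarrow(3)$, and here your plan diverges from the paper and, as written, has two genuine gaps.

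For $(2)\Rightarrow(3)$, your central identity is correct: $P_+(\bar I(1-I)g_N)=-T_{\Theta\bar I}B_N$, i.e.\ $(Id-T_{\bar I})g_N=T_{\Theta\bar I}B_N$ with $B_N=\sum_{n\geq N}a_n\eta_n\overline{\Theta(\lambda_n)}k_{\lambda_n}^I$. But this controls $\|(Id-T_{\bar I})g_N\|_2$, not $\|g_N\|_2$, and the passage from one to the other is precisely the hard point. Your argument about $\|KP_{\mathcal M_N}\|\to 0$ only shows $\|T_{\Theta\bar I}B_N\|_2/\|B_N\|_2\to 1$; it says nothing about $\|T_{\bar I}g_N\|_2$, so you cannot conclude that the AOB constants tend to $1$. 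The paper's key missing ingredient is that $T_{\bar I}|\mathcal K_\Theta$ is \emph{compact}: one observes $(T_{\bar I}|\mathcal K_\Theta)^* = \Theta H_{I\bar\Theta}|\mathcal K_\Theta$, and $I\bar\Theta\in QC$ (from Lemma~\ref{unimodcom}) forces $H_{I\bar\Theta}$ compact by Hartman's theorem. Once $T_{\bar I}|\mathcal K_\Theta$ is compact, $(Id-T_{\bar I})|\mathcal K_\Theta$ is identity plus compact, so $\bigl((Id-T_{\bar I})|\mathcal K_\Theta\bigr)^{-1}T_{\Theta\bar I}$ is unitary plus compact and carries the complete AOB $(\eta_n\overline{\Theta(\lambda_n)}k_{\lambda_n}^I)_n$ onto $(k_{\lambda_n}^\Theta)_n$; the AOB property transfers because the operator is unitary modulo compact. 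The angle condition (ii) is then immediate: $\cos\langle\overline{\mbox{span}}(k_{\lambda_n}^\Theta:n\geq N),I\HH^2\rangle=\|T_{\bar I}|\overline{\mbox{span}}(k_{\lambda_n}^\Theta:n\geq N)\|$, and since $T_{\bar I}|\mathcal K_\Theta$ is compact and $(k_{\lambda_n}^\Theta)_n$ is a complete AOB, this tail norm tends to $0$. You attempt to deduce the same asymptotics directly from the compactness of $K$ in $T_{\Theta\bar I}=U+K$, but that object lives on $\HH^2$ and does not by itself control $T_{\bar I}$ restricted to $\mathcal K_\Theta$.

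For $(3)\Rightarrow(2)$, your plan — show $\Theta\bar I\in QC$ by proving both defect operators $I-T_u^*T_u$ and $I-T_uT_u^*$ compact and invoking Hartman — correctly identifies what must be proven but does not give a mechanism. Your sentence ``promotes them from being asymptotically small to being genuinely compact'' is precisely where the proof has to happen, and the paper provides an explicit construction rather than a passage to the limit. The paper first observes that (ii) gives $\|P_{I\HH^2}P_N^\Theta\|\lesssim\varepsilon_N\to 0$, while $Id-P_N^\Theta$ is finite rank; hence $P_{I\HH^2}|\mathcal K_\Theta$, being a norm limit of finite rank operators, is compact, and so $T_{\bar I}|\mathcal K_\Theta$ is compact. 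Then one uses the structural fact (from the theory of AOBs, quoted from the $H(b)$ book) that an isomorphism carrying one complete AOB onto another is automatically of the form unitary plus compact; this furnishes $U:\mathcal K_I\to\mathcal K_\Theta$, unitary plus compact, with $U(\eta_n\overline{\Theta(\lambda_n)}k_{\lambda_n}^I)=k_{\lambda_n}^\Theta$. The identity $(Id-T_{\bar I})k_{\lambda_n}^\Theta=T_{\Theta\bar I}(\eta_n\overline{\Theta(\lambda_n)}k_{\lambda_n}^I)$ then forces $T_{\Theta\bar I}|\mathcal K_I=(Id-T_{\bar I})U$, which is unitary plus compact; and $T_{\Theta\bar I}:I\HH^2\to\Theta\HH^2$ is already unitary. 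Piecing these together across $\HH^2=\mathcal K_I\oplus I\HH^2$ gives that $T_{\Theta\bar I}$ on $\HH^2$ is unitary plus compact, which is (2). This direct block decomposition is what your proposal is missing; without it, ``asymptotically small'' information about defect operators does not assemble into compactness.
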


\begin{proof}
The equivalence between (1) and (2) is proved in Theorem~\ref{umodcom2}. \\

Let us now prove $(2)\Longrightarrow (3)$. So, we assume that $T_{\Theta \bar I}$ is of the form unitary plus compact. We have already seen that necessarily $T_{\Theta \bar I}$ is invertible (see Lemma ~\ref{unimodcom}) and by Theorem~\ref{Thm:analogue-mishko}, we get that the sequence $(k_{\lambda_n}^\Theta)_{n\geq 1}$ is a Riesz basis for $\mathcal K_\Theta$. Observe now that 
$$
(T_{\bar I}|\mathcal K_\Theta)^*=P_\Theta I|\mathcal K_\Theta=\Theta P_- \bar\Theta I |\mathcal K_\Theta=\Theta H_{I\bar\Theta}|\mathcal K_\Theta.
$$
But Lemma~\ref{unimodcom} implies $I\bar\Theta\in QC$ and by Hartman's theorem, we deduce that $H_{I\bar\Theta}$ is compact. Hence $T_{\bar I}|\mathcal K_\Theta$ is compact. Using Lemma~\ref{Lem:identite-clee}, we have 
$$
(1-I)k_{\lambda_n}^\Theta=\eta_n k_{\lambda_n}^I -\Theta \overline{\Theta(\lambda_n)} \eta_n k_{\lambda_n}^I,
$$
where $\eta_n\to 1$ as $n\to\infty$. If we apply the operator $T_{\bar I}$, using the fact that $\ker T_{\bar I}=\mathcal K_I$, we obtain
$$
(Id-T_{\bar I})k_{\lambda_n}^\Theta=T_{\Theta\bar I}(\eta_n \overline{\Theta(\lambda_n)}k_{\lambda_n}^I).
$$
Since $(\eta_n \overline{\Theta(\lambda_n)}k_{\lambda_n}^I)_{n\geq 1}$ is an orthogonal basis for $\mathcal K_I$, since $(k_{\lambda_n}^\Theta)_{n\geq 1}$ is a Riesz basis of $\mathcal K_\Theta$ and $T_{\Theta\bar I}$ is invertible, the last equation implies that the operator $(Id-T_{\bar I})|\mathcal K_\Theta$ is also invertible. Therefore, we can write 
$$
k_{\lambda_n}^\Theta=((Id-T_{\bar I})|\mathcal K_\Theta)^{-1}T_{\Theta\bar I}(\eta_n \overline{\Theta(\lambda_n)}k_{\lambda_n}^I).
$$
Now observe that the operator $((Id-T_{\bar I})|\mathcal K_\Theta)^{-1}T_{\Theta\bar I}$ is of the form unitary plus compact and the sequence $((\eta_n \overline{\Theta(\lambda_n)}k_{\lambda_n}^I)_{n\geq 1}$ is a complete AOB for $\mathcal K_I$. It implies that the sequence $(k_{\lambda_n}^\Theta)_{n\geq 1}$ is a complete AOB for $\mathcal K_\Theta$. To prove (ii), note that 
\begin{eqnarray*}
\cos\langle \overline{\mbox{span}}(k_{\lambda_n}^\Theta:n\geq N), I\HH^2\rangle&=&\left\|P_{I\HH^2}|\overline{\mbox{span}}(k_{\lambda_n}^\Theta:n\geq N)| \right\|\\
&=&\left\|IP_+\bar I |\overline{\mbox{span}}(k_{\lambda_n}^\Theta:n\geq N)\right\|\\
&=&\left\|T_{\bar I} |\overline{\mbox{span}}(k_{\lambda_n}^\Theta:n\geq N)\right\|.
\end{eqnarray*}
Since $(k_{\lambda_n}^\Theta)_{n\geq 1}$ is a complete AOB for $\mathcal K_\Theta$ and $T_{\bar I}|\mathcal K_\Theta$ is compact, we can use \cite[Lemma 10.31]{fricain2016theory} to conclude that 
$$
\left\|T_{\bar I} |\overline{\mbox{span}}(k_{\lambda_n}^\Theta:n\geq N)\right\|\to 0,\quad \mbox{as }N\to\infty.
$$

It remains to prove $(3)\Longrightarrow (2)$. Let us denote by $P_N^\Theta$ the projection from $\mathcal K_\Theta$ onto $\overline{\mbox{span}}(k_{\lambda_n}^\Theta:n\geq N)$. We have
\begin{eqnarray*}
\left\|P_{I\HH^2}P_N^\Theta\left(\sum_{n\geq 1}a_n k_{\lambda_n}^\Theta\right)\right\|_2^2&=&\left\|P_{I\HH^2}\left(\sum_{n\geq N}a_n k_{\lambda_n}^\Theta\right)\right\|_2^2\\
&\leq & \varepsilon_N^2 \left\|\sum_{n\geq N}a_n k_{\lambda_n}^\Theta\right\|_2^2,
\end{eqnarray*}
where $\varepsilon_N=\cos\langle \overline{\mbox{span}}(k_{\lambda_n}^\Theta:n\geq N), I\HH^2\rangle\to 0$ as $N\to\infty$. Moreover, since $(k_{\lambda_n}^\Theta)_{n\geq 1}$ is a complete AOB for $\mathcal K_\Theta$, then
\begin{eqnarray*}
\left\|P_{I\HH^2}P_N^\Theta\left(\sum_{n\geq 1}a_n k_{\lambda_n}^\Theta\right)\right\|_2^2&\lesssim& \varepsilon_N^2\sum_{n\geq N}|a_n|^2\\
&\leq & \varepsilon_N^2\sum_{n\geq 1}|a_n|^2\\
&\lesssim&\varepsilon_N^2 \left\|\sum_{n\geq 1}a_n k_{\lambda_n}^\Theta \right\|_2^2.
\end{eqnarray*}
Therefore, we get $\|P_{I\HH^2}P_N^\Theta\|\lesssim\varepsilon_N\to 0$, $N\to\infty$. Since 
$$
P_{I\HH^2}|\mathcal K_\Theta=P_{I\HH^2}P_N^\Theta+P_{I\HH^2}(Id-P_N^\Theta),
$$
and $Id-P_N^\Theta$ is a finite rank operator, we get that $P_{I\HH^2}|\mathcal K_\Theta$ can be approximated in norm by a sequence of finite rank operators and hence $P_{I\HH^2}|\mathcal K_\Theta$ is compact. We deduce that $T_{\bar I}|K_\Theta=\overline{I}P_{I\mathcal H^2}|\mathcal K_\Theta$ is also compact. Since $(k_{\lambda_n}^\Theta)_n$ is an AOB for $K_\Theta$ and $(\eta_n \overline{\Theta(\lambda_n)}k_{\lambda_n}^I)_n$ is an AOB for $\mathcal K_I$, there is an isomorphism $U:\mathcal K_I\longrightarrow K_\Theta$ of the form unitary plus compact such that $U(\eta_n\overline{\Theta(\lambda_n)}k_{\lambda_n}^I)=k_{\lambda_n}^\Theta$, $n\geq 1$. Using one more time the relation $$
(Id-T_{\bar I})k_{\lambda_n}^\Theta=T_{\Theta\bar I}(\eta_n \overline{\Theta(\lambda_n)}k_{\lambda_n}^I),
$$ 
we then deduce that the operator $T_{\Theta \bar I}:\mathcal K_I\longrightarrow K_\Theta$ coincide with $(Id-T_{\bar I})U$. In particular, it is of the form unitary plus compact. Note now that the operator $T_{\Theta\bar I}:I\mathcal H^2\longrightarrow \Theta\mathcal H^2$ is unitary. Hence, we get that the operator $T_{\Theta\bar I}:\mathcal H^2\longrightarrow \mathcal H^2$ is unitary plus compact.

\end{proof}

\end{document}